\crefname{hypothesis}{Hypothesis}{Hypotheses}
\title{Pass-Efficient Randomized LU Algorithms for Computing Low-Rank Matrix Approximation
}
\author{Bolong Zhang\thanks{Department of Computer Science, Florida State University, FL, USA 
(\email{bzhang@cs.fsu.edu}).}
\and Michael Mascagni\thanks{Department of Computer Science, Florida State University, FL and Applied and Computational Mathematics Division, NIST, Gaithersburg, MD, USA (\email{mascagni@fsu.edu}).}
}
\newcommand*{\addFileDependency}[1]{
  \typeout{(#1)}
  \@addtofilelist{#1}
  \IfFileExists{#1}{}{\typeout{No file #1.}}
}
\newcommand*{\myexternaldocument}[1]{%
    \externaldocument{#1}%
    \addFileDependency{#1.tex}%
    \addFileDependency{#1.aux}%
}
\renewcommand{\algorithmiccomment}[1]{\bgroup\hfill //~#1\egroup}
\newcommand{\R}{\mathbb{R}}
\pgfplotsset{compat=1.14}
\begin{document}

\maketitle

\begin{abstract}
Low-rank matrix approximation is extremely useful in the analysis of data that arises in scientific computing, engineering applications, and data science.  However, as data sizes grow, traditional low-rank matrix approximation methods, such as singular value decomposition (SVD) and column pivoting QR decomposition (CPQR), are either prohibitively expensive or cannot provide sufficiently accurate results.
A solution is to use randomized low-rank matrix approximation methods such as randomized SVD , and randomized LU decomposition on extremely large data sets.
In this paper, we focus on the randomized LU decomposition method. First, we employ a reorthogonalization procedure to perform the power iteration of the existing randomized LU algorithm  to compensate for the rounding errors caused by the power method.  Then we propose a novel randomized LU algorithm, called PowerLU, for the fixed low-rank approximation problem. PowerLU allows for an arbitrary number of passes of the input matrix, $v \geq 2$.  Recall that the existing randomized LU decomposition only allows an even number of passes.  We prove the theoretical relationship between PowerLU and the existing randomized LU. Numerical experiments show that our proposed PowerLU is generally faster than the existing randomized LU decomposition, while remaining accurate. 
We also propose a version of PowerLU, called PowerLU\_FP, for the fixed precision low-rank matrix approximation problem.  PowerLU\_FP is based on an efficient blocked adaptive rank determination Algorithm \ref{alg:adaptivewithoutupdating} proposed in this paper. 
We present numerical experiments that show that PowerLU\_FP can achieve almost the same accuracy and is faster than the randomized blocked QB algorithm by Martinsson and Voronin.   We finally propose a single-pass algorithm based on LU factorization.  Tests show that the accuracy of our single-pass algorithm is comparable with the existing single-pass algorithms. 

\end{abstract}

\begin{keywords}
 randomized numerical linear algebra, low-rank matrix approximation, randomized SVD, randomized LU
\end{keywords}

\begin{AMS}
  65F99, 65C99
\end{AMS}

\section{Introduction}
In research areas such as data mining, scientific computing, and many engineering applications, the matrices encountered are extremely large.  In \cite{udell2019big}, the authors state that big data matrices are generally low-rank.  Low-rank matrix approximation is a critical technique used to analyze data in many disciplines that have extremely large matrices \cite{mahoney2011randomized}.  Applications of randomized low-rank approximation including imaging processing \cite{elad2006image}, data mining \cite{drineas2016randnla}, and machine learning \cite{dai2018fast} have already been widely explored. Mathematically low-rank approximation of a matrix can be defined as follows.

\begin{definition}
For a given matrix $\mathbf{A} \in \mathbb{R}^{m \times n}$ with rank $k$,  we seek two low-rank matrices $\mathbf{B} \in \mathbb{R}^{m \times k} $, and $\mathbf{C} \in \mathbb{R}^{k \times n}$ with given rank $k \ll min(m,n)$ such that the following norm equation (\ref{eq:lowrank}) holds:   
\begin{equation}
 \|\mathbf{A}-\mathbf{B} \mathbf{C}\|_{N} \ll 1.  
 \label{eq:lowrank}
\end{equation}
We want to approximate $\mathbf{A}$ so that $\|\mathbf{A}-\mathbf{B} \mathbf{C}\|_N$ as small as possible.  We can use any matrix norm, but it customary to use either the spectral norm, $N = 2$, or the Frobenius norm, $N = F$ in numerical linear algebra.
\label{def:lowrank}
\end{definition}
 The above definition (\ref{def:lowrank}) is called the fixed rank problem in low-rank approximation, where we know the rank, $k$, of the matrix in advance. However, in some cases,  the rank of the matrix cannot be known in advance, and instead of giving us the rank, $k$, a real-valued tolerance $\epsilon$ is provided for the norm equation (\ref{eq:lowrank}). In the other words, we want to find the minimum rank $k$ for $\mathbf{B}$ and $\mathbf{C}$, such that equation (\ref{eq:lowrank}) holds for the given tolerance, $\epsilon$. This is another type of low-rank approximation problem, called fixed precision.

The singular value decomposition (SVD) is a common deterministic method to compute the low-rank matrix approximation.  The famous theorem by Eckart and Young \cite{eckart1936approximation} states that the SVD can achieve optimal low-rank approximation results in both the spectral and Frobenius norms.  Suppose the SVD decomposition for matrix $\mathbf{A} \in \R^{m\times n}, m \geq n$ is the following: 
\begin{equation}
\label{eq:svd}
    \mathbf{A} = \mathbf{U}\mathbf{\Sigma}\mathbf{V}^{T}, 
\end{equation}
where $\mathbf{U} \in \mathbb{R} ^ {m \times n}$ has orthonormal columns,  $\mathbf{V} \in \mathbb{R} ^ {n \times n}$ is an orthogonal matrix,  and  $\mathbf{\Sigma} \in \mathbb{R} ^ {n \times n}$ is a diagonal matrix whose diagonal entries are the singular values of $\mathbf{A}$ in descending order. Then the optimal solution to the fixed rank problem (\ref{eq:lowrank}) for a given rank $k$ is $\mathbf{A}_{k} = \mathbf{B} \mathbf{C}$, where we set $\mathbf{B} =\mathbf{U}(:, 1:k)\mathbf{\Sigma}(1:k, 1:k)$ and $\mathbf{C} = \mathbf{V}(:, 1:k)^{T}$\footnote{We use MATLAB matrix indexing notation here. For a given matrix $\mathbf{A} $, $\mathbf{A}(:, 1:k)$ and $\mathbf{A}(1:k,:)$ extract the first $k$ columns and rows of $\mathbf{A}$, respectively.}.  This gives us the explicit matrix norms of our rank $k$ approximation errors as:
\begin{equation}
    \|\mathbf{A}-\mathbf{A}_{k}\|_{2} = \sigma_{k+1}\quad  \text{and} \quad  \|\mathbf{A}-\mathbf{A}_{k}\|_{F} = \sqrt{\sum_{i = k +1}^{n}\sigma_{i}^{2}} .
    \label{eq:ey}
\end{equation}
However, classical algorithms to calculate the SVD are extremely costly.
So it is impractical and often impossible to perform the SVD on a sufficiently large matrix due to the prohibitive time complexity and current machine constraints.  There exists an alternative method, which is obtained from column pivoting QR (CPQR). However, CPQR can not guarantee the accuracy of the low-rank approximation.  If we allow the diagonal matrix $\mathbf{\Sigma}$ in equation (\ref{eq:svd}) to be an upper triangular matrix, $\mathbf{R}$, we obtain the URV algorithm.  If instead we choose a lower triangular matrix, $\mathbf{L}$, for the decomposition, we get the ULV algorithm sometimes called the QLP algorithm, \cite{stewart1999qlp, stewart1993updating}.  This is a middle ground, as URV is faster than SVD yet more accurate than CPQR.
Besides URV, there exist other deterministic methods such as rank-revealing QR (RRQR) \cite{gu1996efficient} and rank-revealing LU (RRLU) \cite{miranian2003strong, pan2000existence} for low-rank approximation. Both RRQR and RRLU use permutations of the matrix columns and rows, which will finally produces a leading submatrix that captures most of the product of the singular values of the original matrix. 
RRQR and RRLU are expensive; however, they do not directly compute a low-rank approximation of the matrix \cite{anderson2017efficient}. 

Randomized low-rank approximation algorithms are a relatively recent development \cite{halko2011finding}.  Compared with the deterministic methods, such as SVD, RRQR and RRLU, randomized methods are usually faster and maintain high accuracy. Generally, there are two classes of randomized low-rank approximation algorithms: sampling-based algorithms and random projection-based algorithms.  Sampling algorithms use randomly selected columns or rows based on sampling probabilities derived from the original matrix.  One then performs a deterministic algorithm, such as SVD, on the smaller subsampled problem \cite{drineas2006fast, drineas2006fast2, drineas2006fast3, drineas2017lectures,kannan2017randomized, mahoney2009cur, mahoney2011randomized}. In contrast to the sampling-based algorithms, the core idea behind the random projection-based methods is to project the high-dimensional space spanned by the columns of the matrix into a low-dimensional space where deterministic methods can be inexpensively applied. In this paper, we mainly discuss the random projection-based methods. 
Formally, for a given matrix $\mathbf{A} \in \mathbb{R}^{m \times n}$, we multiply it with a random Gaussian matrix $\mathbf{\Omega} \in \mathbb{R}^{n \times l}$, where $l = k + q$, $k$ is the target rank and $q$ is the oversampling parameter. Then $\mathbf{Y} = \mathbf{A}\mathbf{\Omega}$ will capture the most action (i.e.~capture $l$ largest singular values)  of $\mathbf{A}$ and more importantly the dimension of the space spanned by the columns of $\mathbf{Y}$ is much smaller. Suppose $\mathbf{Q} \in \mathbb{R}^{m\times l}$ is the orthogonal basis for the approximate space $\mathbf{Y}$.  Then we have the following random QB low-rank approximation
\begin{equation}
    \mathbf{A} \approx \mathbf{Q}\mathbf{Q}^{T}\mathbf{A} =  \mathbf{QB}.
    \label{eq:randqb}
\end{equation}
Here $\mathbf{B} = \mathbf{Q}^{T} \mathbf{A} \in \mathbb{R}^{k\times n}$, which is a smaller matrix on which we could perform some standard factorization such as the SVD. 

\begin{algorithm}[!htb] 
\begin{algorithmic}[1]
\REQUIRE Matrix $\mathbf{A} \in \mathbb{R}^{m \times n}$, a target rank $k$,  oversampling parameter $q$ and $l = k + q$,
$p \geq 0$.  
\ENSURE Orthogonal matrix $\mathbf{U}\in \mathbb{R}^{m \times n}$, $\mathbf{V} \in \mathbb{R}^{n \times n}$ and diagonal $\mathbf{\Sigma} \in \mathbb{R}^{n \times n}$ in an approximate rank-$(l)$ SVD of $\mathbf{A}$, such that $\mathbf{A} \approx \mathbf{U\Sigma V}^{T}$

 \STATE Draw a random $n \times l $ test matrix $\mathbf{\Omega}$; \label{rsvd:1}
 \STATE Form the matrix product  $\mathbf{Y} = (\mathbf{A}\mathbf{A}^{T})^{p}\mathbf{A}\mathbf{ \Omega} $ ; 
 \STATE Perform QR decomposition to obtain orthonormal basis $\mathbf{Q} = qr(\mathbf{Y}) $; \label{rsvd:3} \COMMENT{$\mathcal{C}_{qr}ml^{2}$}
 
\STATE Form the $l \times n$ matrix $\mathbf{B} = \mathbf{Q}^{T}\mathbf{A}$; \label{rsvd:pb} \COMMENT{$\mathcal{C}_{mm}mnl$}
 \STATE Form the SVD of the small matrix $\mathbf{B}: \mathbf{B} = \hat{\mathbf{U}}\mathbf{\Sigma V}^{T}$; \COMMENT{$\mathcal{C}_{svd}nl^{2}$}
 \STATE Form $\mathbf{U} = \mathbf{Q}\hat{\mathbf{U}}$;\COMMENT{$\mathcal{C}_{mm}ml^{2}$}
\end{algorithmic} 
\caption{A Basic Randomized SVD Algorithm}
\label{alg:rsvd}
\end{algorithm}
For a matrix whose singular values decay slowly, The above procedure may not provide good results. However, the power iteration $(\mathbf{A}\mathbf{A}^{T})^{p}\mathbf{A}$ can be applied to solve this problem, where $p$ is the exponent of the power iteration. Suppose $\mathbf{A}$ has the SVD decomposition (\ref{eq:svd}), then
\begin{equation}
  (\mathbf{AA}^{T})^{p}\mathbf{A} = \mathbf{U}\mathbf{\Sigma}^{2p+1}\mathbf{V}^{T}. 
  \label{eq:pi}
\end{equation}
Equation (\ref{eq:pi}) implies that the singular values of the matrix $(\mathbf{A}\mathbf{A}^{T})^{p}\mathbf{A}$ decay faster than the matrix $\mathbf{A}$. 
However, both of them have the same left and right eigenvectors. 
A basic randomized SVD algorithm  (RandSVD) is shown here as Algorithm \ref{alg:rsvd} \cite{halko2011finding}.
However, to avoid the rounding error of float point arithmetic obtained from performing the power iteration, reorthogonalization is needed. 
A typical way to perform the power iteration reorthogonalization is shown in Algorithm \ref{alg:basicpoweriterationsvd} \cite{halko2011finding,voronin2015rsvdpack}, where a reduced QR decomposition is applied in each reorthogonalization. An optimized version of reorthogonalization was proposed in \cite{li2017algorithm}, where the authors used LU factorization to replace QR every time except the last iteration. 
To solve the fixed precision low-rank approximation problem, the adaptive version of the randomized QB algorithm has been proposed to find the rank incrementally in \cite{halko2011finding, martinsson2016randomized} and was optimized in \cite{yu2018efficient}.
For the RandSVD Algorithm  \ref{alg:rsvd},  we need to access the matrix an even number times.  In some cases, the input matrix is enormous, and accessing the matrix is very expensive.  Randomized SVD algorithms that minimize accesses of the matrix have been proposed in \cite{bjarkason2019pass}. For the extremely large matrices, single-pass (single access) algorithms have also been studied recently. 
A general scheme for the single-pass algorithm was proposed in \cite{tropp2017practical}. With the single-pass algorithm, the matrix can processed via its random Gaussian projection. In \cite{halko2011finding}, the authors propose a single-pass algorithm, which uses two random Gaussian matrices to compress the original matrix and finally solve a linear equation. However, the single-pass algorithms in \cite{halko2011finding, tropp2017practical} are less accurate, to some extent.  For a matrix in row or column major format, a more accurate algorithm has been proposed in \cite{yu2018efficient}.

\begin{algorithm}[htb] 
	\begin{algorithmic}[1]
		\REQUIRE Given a $\mathbf{A} \in \mathbb{R}^{m \times n}$, $l < min(m,n)$ and
		$p \geq 0$.
		\ENSURE Orthonormal basis $\mathbf{Q} \in \mathbb{R}^{m \times l}$ for input matrix $\mathbf{A}$
		\STATE Generate random Gaussian matrix $\mathbf{\Omega}$ with size $n \times l$.
		\STATE [$\mathbf{Q}, \sim] = qr(\mathbf{A\Omega})$ \COMMENT{$\mathcal{C}_{qr}mnl  + \mathcal{C}_{qr}ml^{2}$}
		\FOR{$ i = 1:1:p$}
		\STATE [$\mathbf{Q}, \sim] = qr(\mathbf{A^{T}Q})$ \COMMENT{$\mathcal{C}_{qr}mnl  + \mathcal{C}_{qr}nl^{2}$}
		\STATE [$\mathbf{Q}, \sim] = qr(\mathbf{AQ})$ \COMMENT{$\mathcal{C}_{qr}mnl  + \mathcal{C}_{qr}ml^{2}$}
		\ENDFOR
	\end{algorithmic}
	\caption{Power Iteration Reorthogonalization for Algorithm \ref{alg:rsvd}}
	\label{alg:basicpoweriterationsvd}
\end{algorithm}

A randomized LU factorization algorithm (RandLU) for low-rank matrix approximation based on the RandSVD  was first proposed in \cite{shabat2018randomized}.
The authors give two reasons for their motivation for the  RandLU  decomposition as: (a) compared with SVD, LU decomposition is usually faster and also very efficient for sparse matrices with computation time related to the nonzero elements; (b) LU decomposition can be fully parallelized, which makes it very efficient in computation on modern hardware such as the GPU \cite{shabat2018randomized}. 
The overall algorithmic flow of RandLU is shown in the Algorithm \ref{alg:rlu}. In \cite{shabat2018randomized}, the authors showed the efficiency and accuracy of the algorithm \footnote{Code available at \cite{RandLUCode}}.   However,  costly matrix computations such as the pseudoinverse are needed in RandLU. In \cite{shabat2018randomized}, the authors demonstrated the speed superiority of the RandLU Algorithm \ref{alg:rlu} over RandSVD with relatively small matrix size.  Our tests show that RandLU can lose its advantage over RandSVD with larger matrices.  Besides, RandLU can only solve the fixed-rank low-rank approximation problem.  For the fixed precision low-rank approximation problem, RandLU will not work.  Also, RandLU is not a single-pass algorithm.

In this paper, we propose a pass-efficient randomized LU algorithm, called PowerLU, to solve the fixed-rank low-rank approximation problem. Compared with RandLU, PowerLU is  usually faster and also maintains high accuracy.  Moreover, PowerLU can access the matrix, $\mathbf{A}$, an arbitrary number of times. 
Next we propose a version of PowerLU, called PowerLU\_FP, for the fixed precision low-rank approximation problem. PowerLU\_FP is based on an efficient blocked adaptive rank determination Algorithm \ref{alg:adaptivewithoutupdating} proposed in this paper. 
Lastly, we propose a single-pass algorithm for low-rank approximation based on LU decomposition under the assumption that the matrix is stored in column-major or row-major order.  Experiments show that our proposed single-pass algorithm can achieve better results compared with the single-pass algorithm in \cite{halko2011finding}.

The rest of this paper is organized as follows. 
In \S \ref{sec:sec2}, we define some basic notation and review the existing RandLU algorithm.  We then further discuss the RandSVD and RandLU algorithms' commonality. 
Then we describe our proposed PowerLU algorithm and it's analysis in \S \ref{sec:sec3}.  In \S \ref{sec:sec4}, we explore a version of PowerLU algorithm, called PowerLU\_FP,  for the fixed precision low-rank approximation problem.  In \S \ref{sec:sec5}, the single-pass algorithm is discussed. In \S \ref{sec:sec6}, we present the numerical results from numerous experiments to demonstrate the efficiency of our proposed algorithms.

\section{Technical Preliminaries and Related Work}\label{sec:sec2}  

In this section, we first introduce some linear algebra basics needed in our paper. Then we briefly review the RandLU decomposition in \cite{shabat2018randomized}. 
To avoid the round-off error caused by computing the power iteration in RandLU, we employ a practical reorthogonalization procedure. 
Finally,  we give a discussion of the RandSVD and RandLU methods. 
Throughout the paper, we use the following notation: for any matrix $\mathbf{A}$, $\|\mathbf{A}\|$ denotes the spectral norm by default, which is the largest singular value of $\mathbf{A}$.  We use $\|\mathbf{A}\|_{F}$ for the Frobenius norm, which is $(\sum_{i,j}|a_{ij}|^{2})^{1/2}$. 
$\sigma_{k}$ signifies the $k$th largest singular value of $\mathbf{A}$.  To describe algorithms in this paper, we use MATLAB notation in our pseudocode, where $``lu(\cdot)"$ and $``qr(\cdot)"$ to denote the MATLAB builtin LU and QR factorizations. 

\subsection{Linear Algebra Basics}
In this section, we will briefly review the definitions and properties of the orthogonal projection and the pseudoinverse \cite{golub2012matrix}.

\subsubsection*{Orthogonal Projection}
An orthogonal projection is a linear transformation from a vector space to itself. For a matrix $\mathbf{A} \in \R^{m \times n}$, we use $Range(\mathbf{A})$ to denote the space spanned by the columns of $\mathbf{A}$. Suppose $\mathbf{A}$ has full column rank, then we denote the orthogonal projection of $Range(\mathbf{A})$ as $\mathbf{P}_{\mathbf{A}}$, which is defined as follows:
\begin{equation}
    \mathbf{P}_{\mathbf{A}} = \mathbf{A}(\mathbf{A}^{T}\mathbf{A})^{-1}\mathbf{A}^{T}.
    \label{eq:orthoprojector}
\end{equation}
It is easy to prove the following proprieties for $\mathbf{P}_{\mathbf{A}}$:
\begin{itemize}
    \item $\mathbf{P}_{\mathbf{A}}^{2} = \mathbf{P}_{\mathbf{A}}$.
    \item $Range(\mathbf{A}) = Range(\mathbf{P}_{\mathbf{A}})$.
    \item $\mathbf{P}_{\mathbf{A}} = \mathbf{A}\mathbf{A}^{T}$, if $\mathbf{A}$ has orthonormal columns.
\end{itemize}

\subsubsection*{Pseudoinverse}In linear algebra, the pseudoinverse of a matrix $\mathbf{A} \in \R^{m \times n}$ is the generalization of the inverse for non-square matrices. It is customary to denote the pseudoinverse of $\mathbf{A} \in \R^{m \times n}$ with a dagger, $\mathbf{A}^{\dagger}$. If $m \geq n$ and $\mathbf{A} \in \R^{m \times n}$ has full column rank, 
then the pseudoinverse of $\mathbf{A}^{m \times n}$ can be computed as
\begin{equation}
    \mathbf{A}^{\dagger} = (\mathbf{A}^{T}\mathbf{A})^{-1}\mathbf{A}^{T},
    \label{eq:pseudoinverse}
\end{equation}
where $\mathbf{A}^{\dagger} \in \R^{n \times m}$. 
$\mathbf{A}^{\dagger} \in \R^{n \times m}$ has the following properties:  
\begin{itemize}
    \item
    $(\mathbf{A}^{\dagger})^{T} = (\mathbf{A}^{T})^{\dagger}$ .
    \item If $\mathbf{A} \in \R^{m \times n}$ has full column rank, then
    $\mathbf{P}_{A} = \mathbf{A}\mathbf{A}^{\dagger}$ is an orthogonal  projection. 

\end{itemize}

\subsection{Randomized LU Decomposition}
\label{sec:rlu}
LU factorization is a fundamental problem in numerical linear algebra, and plays an important role in solving systems of linear equations. Compared with the QR and SVD decompositions, LU is generally faster.  In \cite{shabat2018randomized}, the authors developed RandLU decomposition for the fixed rank low-rank approximation problem.  RandLU decomposition can be applied to problems such as rank deficient least square, image reconstruction, and dictionary construction \cite{shabat2018randomized, rotbart2015randomized}. 

The RandLU algorithm is shown in Algorithm \ref{alg:rlu}, which produces matrices $\mathbf{P, Q,L, U}$ satisfying Theorem \ref{error: rlu}, where $\mathbf{P, Q}$ are permutation matrices, $\mathbf{L}$ is a lower triangular matrix and $\mathbf{U}$ is an upper triangular matrix when $p = 0$, where $p$ is the exponent of the power iteration in step 2. 
When $p > 0$, power iteration makes the RandLU algorithm  more accurate for a matrix with slowly decaying singular values. However, if we compute step 2 in floating-point arithmetic, rounding errors will overwhelm the small singular values compared to the spectral norm $\|\mathbf{A}\|$ \cite{halko2011finding, yu2018efficient}. 
To solve this problem, reorthogonalization with reduced QR when performing the power iteration was used \cite{halko2011finding}. In \cite{li2017algorithm}, an accelerated power iteration was proposed by replacing the QR with partial pivoting LU except for the last iteration. However, for randomized LU, there is no need for an orthonormal basis at the end, so one need not do the QR factorization at the end.
In this paper, we propose a power iteration reorthogonalization variant, which is shown as Algorithm \ref{alg:basicpoweriteration},  for the RandLU Algorithm \ref{alg:rlu}. In practice, we replace steps \ref{randlu:1}-\ref{randlu:3} in RandLU Algorithm \ref{alg:rlu} with Algorithm \ref{alg:basicpoweriteration}.  In \S \ref{sec:sec6},  we show that with reorthogonalization RandLU can achieve better accuracy in practice. 

\begin{remark}
In Algorithm \ref{alg:rlu}, we use partial pivoting LU instead of applying RRLU \cite{pan2000existence} to $\mathbf{Y}$. The authors pointed out that partial pivoting LU works well for most cases, and they also use partial pivoting LU in their own implementation \cite{RandLUCode}. 
In Algorithm \ref{alg:rlu}, we need to multiply $\mathbf{PA}$ by the pseudoinverse of $\mathbf{L}_{y}$. We use  (\ref{eq:pseudoinverse}) to compute $\mathbf{L}_{y}^{\dagger}$. 
\end{remark}

\begin{remark}
Power iteration has been applied to RandSVD, RandLU and our proposed PowerLU Algorithms. Power iteration reorthogonalization can remove rounding error caused by floating point arithmetic. However, in theory, reorthogonalization algorithms cannot improve the accuracy of randomized algorithms if exact arithmetic is used. The accuracy of the randomized algorithms is related the number of passes of the matrix, which in turn speeds the decay of the singular values.  

\end{remark}

\begin{theorem}[\textbf{Error Bound for RandLU}]
       \label{error: rlu}
    For a given matrix $\mathbf{A} \in \mathbb{R} ^{m \times n}$,  then, its randomized LU produced by Algorithm \ref{alg:rlu} with integers $k$ , $p = 0$ and $\ell (\ell \geq k)$ satisfies:
    \begin{equation}
    \|\mathbf{LU} - \mathbf{PAQ}\| \leq \left(2\sqrt{2n\ell\beta^{2}\gamma^{2} + 1} + 2 \sqrt{2n\ell}\beta \gamma (k(n-k) + 1)\right)\sigma_{k+1}(\mathbf{A}),
\end{equation}
with probability greater than or equal to
\begin{equation}
    \Theta =
   1-
    \frac{1}{\sqrt{2\pi (\ell-k+1)}}\left(\frac{e}{(\ell-k+1)\beta}\right)^{\ell-k+1} - \frac{1}{4(\gamma^{2}-1)\sqrt{\pi n \gamma^{2}}}\left(\frac{2\gamma^{2}}{e^{\gamma^{2}-1}}\right)^{n},
\end{equation}
where $\beta > 0$ and $\gamma > 1$. 
    \label{thm:error}

\end{theorem}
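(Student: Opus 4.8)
The plan is to follow the two-stage strategy—a deterministic structural bound followed by Gaussian tail estimates—that underlies the randomized range finder, adapting it to the non-orthonormal triangular factor that the LU step produces. First I would unwind the construction of Algorithm \ref{alg:rlu} in the case $p = 0$ to obtain an algebraic identity for the residual. Writing $\mathbf{Y} = \mathbf{A}\mathbf{\Omega}$, taking the row-pivoted LU of $\mathbf{Y}$ as $\mathbf{P}\mathbf{Y} = \mathbf{L}_y\mathbf{U}_y$ truncated to its leading $k$ columns, setting $\mathbf{B} = \mathbf{L}_y^{\dagger}\mathbf{P}\mathbf{A}$ (the pseudoinverse multiplication noted in the Remark), and taking the column-pivoted LU $\mathbf{B}\mathbf{Q} = \mathbf{L}_b\mathbf{U}_b$ with $\mathbf{L} = \mathbf{L}_y\mathbf{L}_b$ and $\mathbf{U} = \mathbf{U}_b$, a direct substitution gives $\mathbf{L}\mathbf{U} = \mathbf{L}_y\mathbf{L}_y^{\dagger}\mathbf{P}\mathbf{A}\mathbf{Q}$. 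Hence
\[
\mathbf{L}\mathbf{U} - \mathbf{P}\mathbf{A}\mathbf{Q} = -(\mathbf{I} - \mathbf{L}_y\mathbf{L}_y^{\dagger})\mathbf{P}\mathbf{A}\mathbf{Q},
\]
and because $\mathbf{P},\mathbf{Q}$ are permutation (hence orthogonal) matrices and $\mathbf{L}_y\mathbf{L}_y^{\dagger} = \mathbf{P}_{\mathbf{L}_y}$ is the orthogonal projector onto $Range(\mathbf{L}_y)$ (by the pseudoinverse property recalled in \S\ref{sec:sec2}),
\[
\|\mathbf{L}\mathbf{U} - \mathbf{P}\mathbf{A}\mathbf{Q}\| = \|(\mathbf{I} - \mathbf{P}_{\mathbf{L}_y})\mathbf{P}\mathbf{A}\|.
\]
This reduces the theorem to estimating how well the $k$-dimensional subspace $Range(\mathbf{L}_y)$, produced by the truncated LU of $\mathbf{A}\mathbf{\Omega}$, captures the action of $\mathbf{A}$.

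For the deterministic stage I would bound $\|(\mathbf{I} - \mathbf{P}_{\mathbf{L}_y})\mathbf{P}\mathbf{A}\|$ using the rank-revealing structure of the LU factorization. Introducing the SVD $\mathbf{A} = \mathbf{U}\mathbf{\Sigma}\mathbf{V}^{T}$, I would partition $\mathbf{\Sigma} = \diag(\mathbf{\Sigma}_1,\mathbf{\Sigma}_2)$ at index $k$ and set $\mathbf{\Omega}_1 = \mathbf{V}_1^{T}\mathbf{\Omega}$ (a $k\times\ell$ Gaussian block) and $\mathbf{\Omega}_2 = \mathbf{V}_2^{T}\mathbf{\Omega}$. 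The RRLU growth bound—which controls the trailing factor $\mathbf{U}_{22}$ of a rank-revealing LU by a multiple of $(k(n-k)+1)\sigma_{k+1}$—is what lets me trade the non-orthogonality of $\mathbf{L}_y$ for the explicit factor $k(n-k)+1$. After this replacement the residual collapses to $\sigma_{k+1}$ times a product of the form $\|\mathbf{\Omega}_2\|\,\|\mathbf{\Omega}_1^{\dagger}\|$ plus a lower-order contribution, which accounts for the two summands of the stated bound: the term $2\sqrt{2n\ell\beta^{2}\gamma^{2}+1}$ is the baseline range-finder error (the $\sqrt{\,\cdot+1}$ form is the usual signature), while $2\sqrt{2n\ell}\,\beta\gamma(k(n-k)+1)$ is the extra error incurred by using a truncated triangular factor, carrying the rank-revealing growth factor.

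For the probabilistic stage I would invoke Gaussian matrix tail bounds to convert $\|\mathbf{\Omega}_2\|$ and $\|\mathbf{\Omega}_1^{\dagger}\|$ into the parameters $\gamma$ and $\beta$. Since $\mathbf{\Omega}$ is $n\times\ell$ Gaussian, a standard largest-singular-value estimate bounds $\|\mathbf{\Omega}\|$ (hence $\|\mathbf{\Omega}_2\|\le\|\mathbf{\Omega}\|$) in terms of $\gamma$ except on an event whose probability is exactly the last term subtracted in $\Theta$; and because $\mathbf{\Omega}_1$ is a $k\times\ell$ Gaussian block, the smallest-singular-value estimate bounds $\|\mathbf{\Omega}_1^{\dagger}\|$ by $\beta$ except on an event whose probability is the middle subtracted term, the exponent $\ell-k+1$ being the oversampling excess. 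A union bound over these two failure events produces the probability $\Theta$, and substituting the high-probability estimates into the deterministic bound of the previous paragraph completes the argument, with the requirements $\beta>0$ and $\gamma>1$ exactly those that make the two tail bases admissible.

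The step I expect to be the main obstacle is the deterministic stage. Unlike the orthonormal basis $\mathbf{Q}$ of the randomized range finder, $\mathbf{L}_y$ is only lower triangular, so after truncation to rank $k$ the projector $\mathbf{P}_{\mathbf{L}_y}$ need not coincide with the orthogonal projector onto $Range(\mathbf{A}\mathbf{\Omega})$, and the simple perturbation identity of the range finder no longer applies directly. Quantifying the loss from this truncation—showing that rank-revealing pivoting keeps $Range(\mathbf{L}_y)$ aligned with the dominant singular subspace up to the factor $k(n-k)+1$—is the crux, and it is precisely here that the RRLU existence and growth theorem, rather than mere partial pivoting, must be used.
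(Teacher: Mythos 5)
The paper itself contains no proof of this theorem: immediately after the statement it notes ``This is a result from \cite{shabat2018randomized}'', so there is no in-paper argument to compare against, and the right benchmark is the proof in that reference. Measured against it, your outline reconstructs the argument essentially correctly. The opening identity is right and is the same reduction used there: since $\mathbf{B}\mathbf{Q}=\mathbf{L}_{b}\mathbf{U}_{b}$ and $\mathbf{L}\mathbf{U}=\mathbf{L}_{y}\mathbf{L}_{b}\mathbf{U}_{b}=\mathbf{L}_{y}\mathbf{L}_{y}^{\dagger}\mathbf{P}\mathbf{A}\mathbf{Q}$, the error equals $\|(\mathbf{I}-\mathbf{L}_{y}\mathbf{L}_{y}^{\dagger})\mathbf{P}\mathbf{A}\|$, which is exactly the quantity bounded in the cited proof. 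Your two-stage architecture also matches: a deterministic bound in which Pan's RRLU existence/growth theorem supplies the factor $k(n-k)+1$ (and your identification of the summand $2\sqrt{2n\ell\beta^{2}\gamma^{2}+1}\,\sigma_{k+1}$ as the range-finder error and of $2\sqrt{2n\ell}\,\beta\gamma(k(n-k)+1)\,\sigma_{k+1}$ as the truncation penalty is accurate), followed by the two Gaussian tail estimates, with the largest-singular-value bound producing $\gamma$ and the $n$-exponent failure term, the smallest-singular-value bound on the $k\times\ell$ block producing $\beta$ and the $(\ell-k+1)$-exponent failure term, and a union bound giving $\Theta$. Your closing caveat is also correct and is acknowledged in the paper's own Remark: the guarantee is proved for the RRLU variant of the algorithm, whereas Algorithm \ref{alg:rlu} as implemented uses partial pivoting, so the theorem strictly applies to the rank-revealing version. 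The only shortfall is that your deterministic stage is left at sketch level---the ``collapse'' to $\sigma_{k+1}\|\mathbf{\Omega}_{2}\|\|\mathbf{\Omega}_{1}^{\dagger}\|$ plus a lower-order term is precisely the several pages of work in the reference---but the tools you name are the ones actually used.
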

This is a result from \cite{shabat2018randomized}, and by probability we mean the measure on the random Gaussian matrix used in their algorithm.  We believe that we can prove similar bounds for our proposed algorithms in this paper based on our Theorem \ref{acc:t}.
\begin{algorithm}[htb] 
\begin{algorithmic}[1]
\REQUIRE Matrix $\mathbf{A} \in \mathbb{R}^{m \times n} $, desired rank $k$, $l \geq k$ number of columns to use, and $p \geq 0$.

\ENSURE Matrix: $\mathbf{P} \in \mathbb{R}^{m \times n}, \mathbf{Q} \in \mathbb{R}^{n \times n}, \mathbf{L} \in \mathbb{R}^{m \times n}, \mathbf{U} \in \mathbb{R}^{n \times n}$ such that $\mathbf{PAQ} \approx \mathbf{LU}$, where $\mathbf{P}, \mathbf{Q}$ are orthogonal permutation matrices, $\mathbf{L}$ and $\mathbf{U}$ are lower and upper triangular matrices, respectively. 
\STATE Generate a randomized Gaussian matrix $\mathbf{\Omega}$ with size $n\times l$;\label{randlu:1}
\STATE $\mathbf{Y}=  \mathbf{A}(\mathbf{A}^{T}\mathbf{A})^{p} \mathbf{\Omega}$;   \label{randlu:2}                
\STATE $[\mathbf{L}_{y}, \mathbf{U}_{y}, \mathbf{P}] = lu(\mathbf{Y})$; \label{randlu:3} \COMMENT{$ \mathcal{C}_{lu}ml^{2}$}
\STATE Truncate $\mathbf{L}_{y}$ and $\mathbf{U}_{y}$ by choosing the first $k$ columns and first $k$ rows, respectively, such that $\mathbf{L}_{y} =  \mathbf{L}_{y}(:,1:k)$ and $\mathbf{U}_{y} = \mathbf{U}_{y}(1:k,:)$; \label{randlu:step4}
\STATE $\mathbf{B} = \mathbf{L}_{y}^{\dagger}\mathbf{PA}$;\label{rlu:pb} \COMMENT{$\mathcal{C}_{mm}(mnk + 2mk^{2}) + \mathcal{C}_{inv}k^{3} $}
\STATE Apply LU decomposition to $\mathbf{B}$ with column pivoting $\mathbf{BQ} = \mathbf{L}_{b}\mathbf{U}_{b}$; 
\STATE $\mathbf{L} = \mathbf{L}_{y}\mathbf{L}_{b}$; \COMMENT{$\mathcal{C}_{mm}mk^{2}$}
\STATE $\mathbf{U} = \mathbf{U}_{b}$;
 \end{algorithmic}
\caption{Randomized LU Decomposition}
\label{alg:rlu}
\end{algorithm}

\begin{algorithm}[htb] 
\begin{algorithmic}[1]
\REQUIRE Given a $\mathbf{A} \in \mathbb{R}^{m \times n}$, $l < min(m,n)$ and
$p \geq 0$.
\ENSURE $\mathbf{P}  \in \mathbb{R}^{m \times m}$, $\mathbf{L}  \in \mathbb{R}^{m \times l}$ and $\mathbf{U}  \in \mathbb{R}^{l \times l}$ such that $\mathbf{P}^{T}\mathbf{LU} = \mathbf{A}(\mathbf{A}^{T}\mathbf{A})^{p}\mathbf{\Omega}$, where $\mathbf{\Omega}$ is the random Gaussian matrix generated in this algorithm. 
\STATE Generate random Gaussian matrix $\mathbf{\Omega}$ with size $n \times l$;
\STATE $[\mathbf{L}, \mathbf{U}, \mathbf{P}] = lu(\mathbf{A\Omega})$; \COMMENT{$\mathcal{C}_{mm}mnl  + \mathcal{C}_{lu}ml^{2}$}
\IF{$p>0$}
\STATE $\mathbf{L} = \mathbf{P}^{T}\mathbf{L}$;
\FOR{$i = 1:1:p$}
    \STATE $[\mathbf{L}, \sim] = lu(\mathbf{A}^{T}\mathbf{L})$;  \COMMENT{$\mathcal{C}_{mm}mnl + \mathcal{C}_{lu}nl^{2}$}
    \IF{$i == p$}
    \STATE $[\mathbf{L}, \mathbf{U}, \mathbf{P}] = lu(\mathbf{A}\mathbf{L}))$;   \COMMENT{$\mathcal{C}_{mm}mnl + \mathcal{C}_{lu}ml^{2}$}
    \ELSE
    \STATE $[\mathbf{L}, \sim] = lu(\mathbf{A}\mathbf{L})$;\COMMENT{$\mathcal{C}_{mm}mnl + \mathcal{C}_{lu}ml^{2}$}
    \ENDIF
\ENDFOR
\ENDIF
 \end{algorithmic}
\caption{Power Iteration Reorthogonalization for Algorithm \ref{alg:rlu}}
\label{alg:basicpoweriteration}
\end{algorithm}

To describe the time complexity for the algorithms listed in this paper, we use the same notation as in \cite{martinsson2016randomized}:  let $\mathcal{C}_{mm}, \mathcal{C}_{lu}, \mathcal{C}_{qr}$, $\mathcal{C}_{svd}$ and $\mathcal{C}_{inv}$ denote the scaling constants for the cost of executing a matrix-matrix multiplication, partial pivoting LU factorization, a full QR factorization, an SVD factorization, and a matrix inversion respectively. Specifically, we have:
\begin{itemize}
    \item Multiplying two matrices of size $m \times n$ and $n \times r$ costs $\mathcal{C}_{mm}mnr$
    \item Performing a partial pivoting LU factorization of a  matrix of size $m \times n$, with $m \geq n$, costs $\mathcal{C}_{lu}mn^{2}$
    \item  Performing a full QR factorization of a matrix of size $m \times n$, with $m \geq n$, costs $\mathcal{C}_{qr}mn^{2}$
    \item Performing an SVD factorization of a matrix of size $m \times n$, with $m \geq n$, costs $\mathcal{C}_{svd}mn^{2}$
    \item Inverting a matrix of size $n \times n$, costs $\mathcal{C}_{inv}n^{3}$
\end{itemize}

Here, we compare the time complexity for RandLU and RandSVD. 
Reorthogonalization of the power iteration will be employed to reduce the rounding error. So for both RandLU and RandSVD, we analyze their time complexity together with their corresponding power iteration reorthogonalization variants.  It is easy to see the time complexity for the RandLU Algorithm \ref{alg:rlu}, where the power iteration is computed by Algorithm \ref{alg:basicpoweriteration},  is given by the formula  (\ref{time:rlu}) with $p \geq 0$ as following: 
\begin{equation}
\begin{split}
\label{time:rlu}
    \mathcal{C}_{RandLU} &\sim  \mathcal{C}_{mm}mnl + \mathcal{C}_{lu}ml^{2}  + p\cdot\left(2 \mathcal{C}_{mm}mnl +  \mathcal{C}_{lu}ml^{2} + \mathcal{C}_{lu}nl^{2}\right) +  \\ &
    \mathcal{C}_{mm}(mnk + 2mk^{2}) + \mathcal{C}_{inv}k^{3} + \mathcal{C}_{mm}mk^{2}.
\end{split}
\end{equation}

According to Algorithm \ref{alg:rsvd}, the time complexity for RandSVD Algorithm \ref{alg:rsvd}, where the power iteration  is computed by Algorithm \ref{alg:basicpoweriterationsvd} \cite{halko2011finding} is as follows:
\begin{equation}
\begin{split}
    \mathcal{C}_{RandSVD} &\sim \mathcal{C}_{mm}mnl + \mathcal{C}_{qr}ml^{2} + p\cdot\left(2 \mathcal{C}_{mm}mnl +  \mathcal{C}_{qr}ml^{2} + \mathcal{C}_{qr}nl^{2}\right) + \\ & \mathcal{C}_{mm}mnl + \mathcal{C}_{svd}nl^{2} + \mathcal{C}_{mm}ml^{2}.
    \label{time:rsvdp}
\end{split}
\end{equation}

It is hard to tell which of the two is larger, (\ref{time:rlu}) or  (\ref{time:rsvdp}). When $p = 0$,the authors \cite{shabat2018randomized} showed that RandLU is faster than RandSVD with fixed matrix size $m = n = 3000$, and various target rank values. Our own tests with the same matrix size confirmed this.  However, our tests  show that RandLU will lose this advantage over RandSVD when we use larger matrices. Details can be found in \S \ref{sec:sec6}.

So far, we have introduced RandLU factorization shown in \cite{shabat2018randomized}, and we employed the  power iteration reorthogonalization for RandLU as shown in Algorithm \ref{alg:basicpoweriteration}. RandLU can solve the fixed rank low-rank approximation problem for a given rank.  
However, in some cases, it is impossible to obtain the rank of the matrix in advance, so now we will explore algorithms for the fixed precision low-rank approximation problem based on LU factorization.

\subsection{Discussion} In this section, we develop the intuition to help understand our proposed randomized LU algorithm.  We previously discussed the RandSVD and RandLU algorithms.  And we will use our understanding of RandSVD Algorithm \ref{alg:rsvd} to examine RandLU Algorithm \ref{alg:rlu}.

We can organize the RandSVD Algorithm \ref{alg:rsvd} into two stages as follows:
\begin{enumerate}
    \item \label{stages:1} Randomized Stage: Find an orthonormal basis $\mathbf{Q} \in \R^{m \times l}$ of the column space of $\mathbf{A} \in \R ^{m \times n}$ via random projection. Then we approximate $\mathbf{A} \approx \mathbf{P}_{\mathbf{Q}}\mathbf{A} = \mathbf{Q}\mathbf{Q}^{T}\mathbf{A}$.
    \item \label{stages:2}  Deterministic SVD Stage: Let $\mathbf{B} = \mathbf{Q}^{T}\mathbf{A}$, then perform a deterministic SVD on $\mathbf{B}$, which is much smaller than $\mathbf{A}$.
\end{enumerate}
RandLU is organized similarly.
First, RandLU produces a lower triangular matrix, $\mathbf{L}_{y}$.  This matrix is used with its psuedoinverse to create an orthogonal projection (a permuted) of matrix $\mathbf{A}$.  This is analogous to the RandSVD randomization stage:
\begin{equation}
   \mathbf{PA} \approx \mathbf{L}_{y}\mathbf{L}_{y}^{\dagger}\mathbf{PA}.  
   \label{eq:approxlulu}
\end{equation}
Then partial pivoting LU is performed on $\mathbf{B} = \mathbf{L}_{y}^{\dagger}\mathbf{PA}$, where $\mathbf{B}$ is much smaller than $\mathbf{A}$.  This is the analog of the deterministic stage in RandSVD. 


However, it is costly to compute equation (\ref{eq:approxlulu}) in RandLU, since the pseudoinverse of a lower triangular matrix is needed.  In Algorithm \ref{alg:rlu}, step \ref{randlu:2}, power iteration is performed as $\mathbf{A}(\mathbf{A}^{T}\mathbf{A})^{p}\mathbf{\Omega}$.  As we discussed, to eliminate rounding errors in power iteration, a reorthogonalization procedure is necessary.  As Algorithm \ref{alg:basicpoweriteration2} shows, we can obtain an orthonormal basis $\mathbf{V} \in \R^{n \times l}$ for $\mathbf{A}^{T}$ by performing reorthogonalization on $(\mathbf{A}^{T}\mathbf{A})^{p}\mathbf{\Omega}$ when $p \geq 1$. Then we get the approximation
\begin{equation}
    \mathbf{A} \approx \mathbf{A}\mathbf{V}\mathbf{V}^{T}.
    \label{eq:rightsingular}
\end{equation}

In this paper, we present a randomized LU algorithm, called PowerLU, which is based on the approximation (\ref{eq:rightsingular}). We named our proposed algorithm PowerLU because the power iteration exponent $p \geq 1$ is required by our algorithm. The basic idea of PowerLU is that we perform partial pivoting LU on $\mathbf{AV}$ to get lower and upper triangular matrices $\mathbf{L}_{1}$, $\mathbf{U}_{1}$. Let $\mathbf{B} = \mathbf{U}_{1}\mathbf{V}^{T}$, then we perform another partial pivoting LU on $\mathbf{B}^{T}$.  PowerLU does not need to compute a pseudoinverse and so is generally faster than RandLU. 
Another advantage of using (\ref{eq:rightsingular}) is that if $\mathbf{V}$ has sufficiently long columns, we can adaptively determine the rank, $k \leq l$, of $\mathbf{A}$ quite easily. 
We will discuss the details in the upcoming sections.

\section{PowerLU: A Method for the Fixed Rank Low-Rank Matrix Approximation}\label{sec:sec3}
In this section, we discuss a new randomized LU factorization, called PowerLU, for the fixed rank low-rank matrix approximation problem  (\ref{eq:lowrank}). 
Our proposed method is based on (\ref{eq:rightsingular}). 
It is generally faster than the RandLU and RandSVD without loss of accuracy. In some cases, accessing the matrix $\mathbf{A}$ can be expensive.  RandLU and RandSVD using power iteration access the matrix an even number of times. Our proposed PowerLU allows an arbitrary number of matrix accesses, which can reduce computation time when an odd number of accesses is sufficient. 

\subsection{PowerLU: A Low-Rank Matrix Approximation Method} In this section, we will present our proposed PowerLU method in detail. 

The PowerLU algorithm is shown in Algorithm \ref{alg:bpowerlu}.  
Given a  matrix $\mathbf{A} \in \mathbb{R}^{m\times n}$, suppose we want to find a low-rank approximation for $\mathbf{A}$ with target rank $k$.  First we generate a random Gaussian matrix $\mathbf{\Omega} \in \mathbb{R}^{n \times l}$ where $l \geq k$. In practice we choose $l = k + q$, where $q = 5$ or $10$ is the oversampling size.  This is needed to ensure that the final rank is at least $k$ with high probability.  

In next step, to deal with a matrix with singular values decaying slowly, we compute an orthonormal basis $\mathbf{V} \in \R^{n \times l}$ by using QR decomposition on 
\begin{equation}
    (\mathbf{A}^{T}\mathbf{A})^{p}\mathbf{\Omega} = \mathbf{VZ}.
    \label{eq:powerlu}
\end{equation}
So $\mathbf{A}$ can be approximated by
\begin{equation}
    \mathbf{A} \approx \mathbf{A}\mathbf{V}\mathbf{V}^{T}.
    \label{eq:approxlu}
\end{equation}
Then we form the partial pivoting LU decomposition of $\mathbf{AV}$
\begin{equation}
    \mathbf{L_{1}U_{1}} = \mathbf{PAV},
    \label{eq:plu}
\end{equation}
where $\mathbf{P}$ is a permutation matrix.  Since $\mathbf{L_{1}U_{1}}$ derive from $\mathbf{AV}$, we can obtain an approximation for $\mathbf{PA}$ as 
\begin{equation}
    \mathbf{P}\mathbf{A} \approx \mathbf{P}\mathbf{A}\mathbf{V}\mathbf{V}^{T}  = \mathbf{L_{1}}\mathbf{U_{1}}\mathbf{V}^{T}.
    \label{eq:firstlu}
\end{equation}
Let $\mathbf{B}  = \mathbf{U}_{1}\mathbf{V}^{T}$, and we use standard partial pivoting LU on $\mathbf{B}^{T}$ to obtain

\begin{equation}
\label{eq:secondlu}
    \mathbf{Q}\mathbf{B}^{T} =\mathbf{L}_{2}\mathbf{U}_{2}.
\end{equation}
Combining the above equations (\ref{eq:firstlu}) and (\ref{eq:secondlu}), we obtain
\begin{equation}
   \mathbf{P} \mathbf{A} \mathbf{Q} \approx  \mathbf{L}\mathbf{U},
   \label{eq:powerlufinal}
\end{equation}
where $\mathbf{L} = \mathbf{L}_{1}\mathbf{U}_{2}^{T}$ and $\mathbf{U} = \mathbf{L}_{2}^{T}$. 

\begin{algorithm}[htb] 
\begin{algorithmic}[1]
\REQUIRE Given $\mathbf{A} \in \mathbb{R}^{m\times n}$, desired rank  $k$, oversampling parameter $q$ and $l =  k + q$, and $p \geq 1$.
\ENSURE Matrix: $\mathbf{P}, \mathbf{Q}, \mathbf{L}, \mathbf{U}$ such that $\mathbf{PAQ} \approx \mathbf{LU}$, where $\mathbf{P}, \mathbf{Q}$ are orthogonal permutation matrices, $\mathbf{L} \in \mathbb{R}^{m \times k}$ and $\mathbf{U} \in \mathbb{R}^{k\times n}$ are the lower and upper triangular matrix, respectively.
\STATE Generate a randomized Gaussian matrix $\mathbf{\Omega}$ with size $n \times l$; \label{powerLU:step1}
\STATE Compute $(\mathbf{A}^{T}\mathbf{A})^{p}  \mathbf{\Omega}$  and its orthonormal basis $\mathbf{V}$; \label{powerLU:step2}
\STATE $\mathbf{Y} = \mathbf{A}\mathbf{V}(:,1:k)$;\COMMENT{$\mathcal{C}_{mm}mnk$}      
\STATE  $[\mathbf{L}_{1}, \mathbf{U}_{1}, \mathbf{P}] = lu(\mathbf{Y})$;\label{powerlu:step4}\COMMENT{$\mathcal{C}_{lu} mk^{2}$}
\STATE  $\mathbf{B} = \mathbf{U}_{1}\mathbf{V}(:,1:k)^{T} $; \COMMENT{$\mathcal{C}_{mm} nk^{2}$}
\STATE $[\mathbf{L}_{2}, \mathbf{U}_{2}, \mathbf{Q}] = lu(\mathbf{B}^{T})$; \COMMENT{$\mathcal{C}_{lu} nk^{2}$}
\STATE $\mathbf{L} = \mathbf{L}_{1}\mathbf{U}_{2}^{T}$; \COMMENT{$\mathcal{C}_{mm} mk^{2}$}
\STATE $\mathbf{U} = \mathbf{L}_{2}^{T}$; \label{powerlu:step8}
\end{algorithmic}
\caption{The PowerLU Algorithm}
\label{alg:bpowerlu}
\end{algorithm}

In (\ref{eq:secondlu}), we form $\mathbf{B}$ through matrix-matrix multiplication. Compared with RandLU, PowerLU is does not compute a pseudoinverse, and so is usually much faster.  In step \ref{powerLU:step2} of the PowerLU Algorithm \ref{alg:bpowerlu}, we need to perform the power iteration. As we discussed in \S \ref{sec:rlu}, we might lose some accuracy if we calculate it without reorthogonalization.
In practice we use Algorithm \ref{alg:basicpoweriteration2} to replace steps \ref{powerLU:step1}-\ref{powerLU:step2} in Algorithm \ref{alg:bpowerlu}. In Algorithm \ref{alg:bpowerlu}, we need to obtain the orthonormal basis $\mathbf{V}$. Therefore, Algorithm \ref{alg:basicpoweriteration2} for PowerLU is different with Algorithm \ref{alg:basicpoweriteration}.  

\subsection{Accuracy of Algorithm \ref{alg:bpowerlu}} 
The accuracy of the PowerLU Algorithm \ref{alg:bpowerlu} is the approximation error obtained from (\ref{eq:approxlu}). Theorem 4.1 in \cite{bjarkason2019pass} gives us an error bound for (\ref{eq:approxlu}), which we state here as Theorem \ref{thm:approxlu}. 

\begin{theorem}
\label{thm:approxlu}
Let $\mathbf{A}\in \R^{m \times n}$ with nonnegative singular values $\sigma_{1} \geq \sigma_{2} 
\geq \cdots \geq  \sigma_{min(m,n)} $, and let $k \geq 2$ be the target rank, $q \geq 2$ be an oversampling parameter, with $k + q 
\leq min(m,n)$. Draw a Gaussian random matrix $\mathbf{\Omega} \in \R^{n \times (k+q)}$ and set $\mathbf{Y} = (\mathbf{A}^{T}\mathbf{A})^{p}\mathbf{\Omega}$ for $p \geq 1$. Let $\mathbf{V} \in \R^{n \times (k + q)}$ be an orthonormal matrix which forms a basis for the range of $\mathbf{Y}$. Then
\begin{equation}
    \mathbb{E}[\|\mathbf{A} - \mathbf{A}\mathbf{V}\mathbf{V}^{T}\|]  \leq \left[\left(1 + \sqrt{\frac{k}{q-1}}\sigma_{k+1}^{2p}\right)\sigma_{k+1}^{2p} + \frac{e\sqrt{k+q}}{q}\left(\sum_{j>k}\sigma_{j}^{4p}\right)^{1/2}\right]^{1/(2p)}. 
\end{equation}
\end{theorem}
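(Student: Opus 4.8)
The plan is to reduce the claim to the standard average-case error bound for the randomized range finder applied to the symmetric positive semidefinite matrix $\mathbf{M}^{p}$, where $\mathbf{M} = \mathbf{A}^{T}\mathbf{A}$ (so $\mathbf{M}$ has eigenvalues $\sigma_{i}^{2}$). Write $\mathbf{N} = \mathbf{I} - \mathbf{V}\mathbf{V}^{T}$ for the orthogonal projector onto the orthogonal complement of $\mathrm{Range}(\mathbf{V}) = \mathrm{Range}(\mathbf{Y})$, so that $\mathbf{A} - \mathbf{A}\mathbf{V}\mathbf{V}^{T} = \mathbf{A}\mathbf{N}$ and the target quantity is $\|\mathbf{A}\mathbf{N}\|$. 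Since $\mathbf{N}$ is an orthogonal projector,
\[
\|\mathbf{A}\mathbf{N}\|^{2} = \|\mathbf{N}\mathbf{A}^{T}\mathbf{A}\mathbf{N}\| = \|\mathbf{N}\mathbf{M}\mathbf{N}\| = \|\mathbf{M}^{1/2}\mathbf{N}\|^{2},
\]
hence $\|\mathbf{A}\mathbf{N}\| = \|\mathbf{M}^{1/2}\mathbf{N}\|$. This moves everything into the symmetric positive semidefinite setting.

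The deterministic heart of the argument is a power-monotonicity inequality: for any orthogonal projector $\mathbf{N}$, any positive semidefinite $\mathbf{M}$, and exponents $0 < s \leq t$, one has $\|\mathbf{M}^{s}\mathbf{N}\| \leq \|\mathbf{M}^{t}\mathbf{N}\|^{s/t}$. I would prove this by writing $\|\mathbf{M}^{s}\mathbf{N}\| = \max_{x \in \mathrm{Range}(\mathbf{N}),\, \|x\| = 1}\|\mathbf{M}^{s}x\|$ and then, for a fixed unit vector $x$, applying Jensen's inequality to the spectral measure $\mu_{x}$ of $\mathbf{M}$ at $x$: since $\|\mathbf{M}^{s}x\|^{2} = \int \lambda^{2s}\,d\mu_{x}(\lambda)$ with $\mu_{x}$ a probability measure, convexity of $u \mapsto u^{t/s}$ gives $\|\mathbf{M}^{s}x\| \leq \|\mathbf{M}^{t}x\|^{s/t}$, and taking the maximum over $x$ proves the inequality. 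Applying it with $s = 1/2$ and $t = p$ yields
\[
\|\mathbf{A}\mathbf{N}\| = \|\mathbf{M}^{1/2}\mathbf{N}\| \leq \|\mathbf{M}^{p}\mathbf{N}\|^{1/(2p)} = \|\mathbf{N}\mathbf{M}^{p}\|^{1/(2p)},
\]
where the final equality uses the symmetry of both $\mathbf{M}^{p}$ and $\mathbf{N}$. This is the step that converts the even power of $\mathbf{A}$ into a single power and produces the outer exponent $1/(2p)$.

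It remains to bound $\|\mathbf{N}\mathbf{M}^{p}\| = \|(\mathbf{I} - \mathbf{V}\mathbf{V}^{T})\mathbf{M}^{p}\|$ in expectation. Because $\mathbf{V}\mathbf{V}^{T}$ is exactly the orthogonal projector onto the range of $\mathbf{Y} = \mathbf{M}^{p}\mathbf{\Omega}$, this is precisely the error of the randomized range finder run on the symmetric positive semidefinite matrix $\mathbf{M}^{p}$, whose ordered singular values are $\sigma_{i}^{2p}$, with the Gaussian test matrix $\mathbf{\Omega}$. The Halko--Martinsson--Tropp average-case range-finder estimate then gives a bound of the form $(1 + \sqrt{k/(q-1)})\,\sigma_{k+1}^{2p} + \frac{e\sqrt{k+q}}{q}\big(\sum_{j>k}\sigma_{j}^{4p}\big)^{1/2}$. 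Finally I would push the expectation through the concave map $x \mapsto x^{1/(2p)}$: by Jensen's inequality $\mathbb{E}\,\|\mathbf{A}\mathbf{N}\| = \mathbb{E}\,\|\mathbf{N}\mathbf{M}^{p}\|^{1/(2p)} \leq \big(\mathbb{E}\,\|\mathbf{N}\mathbf{M}^{p}\|\big)^{1/(2p)}$, and substituting the range-finder bound produces the stated result.

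The only genuinely delicate point is the deterministic power-monotonicity inequality, since it is what legitimizes replacing $\mathbf{A}$ by $\mathbf{M}^{p}$ while tracking the exponent correctly; the remaining steps are a direct appeal to an existing range-finder bound together with Jensen's inequality. I would also reconcile the precise constant inside the parentheses of the stated right-hand side with the cited estimate, checking that the powers of $\sigma_{k+1}$ and $\sigma_{j}$ appearing there are exactly those of the singular values $\sigma_{i}^{2p}$ of $\mathbf{M}^{p}$.
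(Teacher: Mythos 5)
You cannot really be compared against the paper's own proof, because the paper gives none: Theorem \ref{thm:approxlu} is quoted verbatim from Theorem 4.1 of \cite{bjarkason2019pass}, and that citation is the paper's entire justification. Your argument is, in substance, a correct reconstruction of how the cited result is proved: set $\mathbf{M}=\mathbf{A}^{T}\mathbf{A}$, observe that $\mathbf{V}\mathbf{V}^{T}$ is exactly the orthogonal projector onto $Range(\mathbf{M}^{p}\mathbf{\Omega})$, reduce $\|\mathbf{A}(\mathbf{I}-\mathbf{V}\mathbf{V}^{T})\|$ to $\|(\mathbf{I}-\mathbf{V}\mathbf{V}^{T})\mathbf{M}^{p}\|^{1/(2p)}$ by power monotonicity (essentially Proposition 8.6 of \cite{halko2011finding}; your Jensen-on-the-spectral-measure proof of it is valid, and $s=1/2\leq t=p$ holds since $p\geq 1$), apply the average-case spectral-norm range-finder bound (Theorem 10.6 of \cite{halko2011finding}) to $\mathbf{M}^{p}$, whose singular values are $\sigma_{j}^{2p}$, and pull the expectation through the concave $2p$-th root by Jensen. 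One cosmetic slip: in your final chain the first ``$=$'' should be ``$\leq$'', inherited from the deterministic step; the logic is unaffected.

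The point you deferred---reconciling the constant---is the only substantive issue, and it resolves in your favor: the statement as printed is a mis-transcription. Your derivation (and Theorem 4.1 of \cite{bjarkason2019pass}, and Corollary 10.10 of \cite{halko2011finding} for the odd-pass analogue) gives
\[
\mathbb{E}\left[\|\mathbf{A}-\mathbf{A}\mathbf{V}\mathbf{V}^{T}\|\right] \leq \left[\left(1+\sqrt{\frac{k}{q-1}}\right)\sigma_{k+1}^{2p}+\frac{e\sqrt{k+q}}{q}\left(\sum_{j>k}\sigma_{j}^{4p}\right)^{1/2}\right]^{1/(2p)},
\]
with no factor $\sigma_{k+1}^{2p}$ inside the first parenthesis. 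The printed version, with $\bigl(1+\sqrt{k/(q-1)}\,\sigma_{k+1}^{2p}\bigr)\sigma_{k+1}^{2p}$, does not follow from this argument---for $\sigma_{k+1}<1$ it is a strictly stronger claim---and it cannot be the intended statement: the offending term $\sqrt{k/(q-1)}\,\sigma_{k+1}^{4p}$ scales as $c^{4p}$ under $\mathbf{A}\mapsto c\mathbf{A}$ while every other term inside the brackets scales as $c^{2p}$, so the printed bound is not even homogeneous. (The same transcription slip appears in Theorem \ref{thm:approxlu2}.) In short: your proof is the right one; do not contort it to chase the printed constant.
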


Using Theorem \ref{thm:approxlu}, PowerLU can achieve high accuracy results for a matrix with rapidly decaying singular values. Otherwise, we can improve the accuracy by employing a larger value of exponent, $p$, in the power iteration. 


\begin{remark}
In the PowerLU Algorithm \ref{alg:bpowerlu}, we set $p \geq 1$. Since if $p = 0$, Algorithm \ref{alg:basicpoweriteration2} cannot output a orthonormal basis of $\mathbf{A}$.

\end{remark}

\begin{algorithm}[htb] 
\begin{algorithmic}[1]
\REQUIRE Given a matrix $\mathbf{A} \in \mathbb{R}^{m \times n}$ , $l < min(m,n)$ and
 $p \geq 1$.
\ENSURE Orthonormal basis $\mathbf{V} \in \mathbb{R}^{m \times l}$ of the matrix $(\mathbf{A}\mathbf{A}^{T})^{p}\mathbf{\Omega}$ 
\STATE Generate a random Gaussian matrix $\mathbf{\Omega}$ with size $n \times l$;
\FOR{$i = 1:1:p$}
    \STATE $[\mathbf{\Omega}, \sim] = lu(\mathbf{A}\mathbf{\Omega})$;  \COMMENT{$\mathcal{C}_{mm}mnl + \mathcal{C}_{lu}ml^{2}$}
    \IF{$i == p$}
    \STATE $[\mathbf{V}, ~] = qr(\mathbf{A}^{T}\mathbf{\Omega}))$; \label{PowerIteration: Step5}  \COMMENT{$\mathcal{C}_{mm}mnl + \mathcal{C}_{qr}nl^{2}$}
    \ELSE
    \STATE $[\mathbf{\Omega}, \sim] = lu(\mathbf{A}^{T}\mathbf{\Omega})$;\COMMENT{$\mathcal{C}_{mm}mnl + \mathcal{C}_{lu}nl^{2}$}
    \ENDIF
\ENDFOR
 \end{algorithmic}
\caption{Power Iteration Reorthogonalization for Algorithm \ref{alg:bpowerlu}}
\label{alg:basicpoweriteration2}
\end{algorithm}

\subsection{Relationship of PowerLU to RandLU}
So far we have discussed both RandLU and PowerLU. In practice, reorthogonalization of the power iteration is used in both algorithms. The time complexity for the PowerLU Algorithm \ref{alg:bpowerlu}, where the power iteration is computed by Algorithm \ref{alg:basicpoweriteration2} is 

 \begin{equation}
 \label{eq:powerlutimecost}
 \begin{split}
     \mathcal{C}_{PowerLU} & \sim  (p-1) \cdot \left[\left( 2\mathcal{C}_{mm}mnl + \mathcal{C}_{lu}ml^{2} + \mathcal{C}_{lu}nl^{2}\right)\right]  + 2\mathcal{C}_{mm}mnl \\ &+ \mathcal{C}_{lu}ml^{2} + \mathcal{C}_{qr}nl^{2} + \mathcal{C}_{mm} ( mnk + nk^{2} + mk^{2})  +   \mathcal{C}_{lu} (nk^{2} + mk^{2}).
\end{split}
 \end{equation}

So the difference in time complexity between RandLU and PowerLU is
 
 \begin{equation}
 \label{eq:diff}
 \begin{split}
 \mathcal{C}_{RandLU} - \mathcal{C}_{PowerLU} 
    &= \mathcal{C}_{mm} mnl + \mathcal{C}_{lu} ml^{2} +    (\mathcal{C}_{lu} -  \mathcal{C}_{qr})nl^{2} \\ 
    &+ \mathcal{C}_{mm}(2mk^{2}- nk^{2}) + \mathcal{C}_{inv}k^{3}- \mathcal{C}_{lu}(nk^{2} + mk^{2}).
 \end{split}
 \end{equation}

 \begin{remark}
 For RandLU and PowerLU, we assume the input matrix $\mathbf{A}\in \mathbb{R}^{m \times n}$, where $m \geq n$.  Since $l \approx k$  equation (\ref{eq:diff}) will most likely be positive, which means that our proposed PowerLU algorithm is less costly than RandLU. Our numerical tests confirm this for dense matrices. 
 \end{remark}

Theorem \ref{acc:t} below states the relation between the PowerLU and RandLU error bounds in the manner shown in Theorem \ref{error: rlu}. 

\begin{theorem}
Let $\mathbf{A}$ be an $m \times n$ matrix, let $l$ be a positive integer such that $l < min(m, n)$, let $p$ be a positive integer, and let $\mathbf{\Omega}$ be a random Gaussian matrix of size $n \times l$.  Let $\mathbf{P}_{p}\mathbf{A}\mathbf{Q}_{p} \approx \mathbf{L}_{p}\mathbf{U}_{p}$ be the factorization resulting from the PowerLU Algorithm \ref{alg:bpowerlu}.   Let $\mathbf{P}_{r}\mathbf{A}\mathbf{Q}_{r} \approx \mathbf{L}_{r}\mathbf{U}_{r}$ be the factorization resulting from the RandLU Algorithm \ref{alg:rlu}. Suppose the rank of the matrix is at least $l$ and both algorithms are performed in exact arithmetic.  Then we have:
\begin{equation}
    Range(\mathbf{P}_{r}^{T}\mathbf{L}_{r}) = Range(\mathbf{P}_{p}^{T}\mathbf{L}_{p}).
\end{equation}
  \label{acc:t}
\end{theorem}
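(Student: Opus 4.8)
The plan is to show that both $Range(\mathbf{P}_{r}^{T}\mathbf{L}_{r})$ and $Range(\mathbf{P}_{p}^{T}\mathbf{L}_{p})$ coincide with the range of the common randomized sketch $\mathbf{Y} = (\mathbf{A}\mathbf{A}^{T})^{p}\mathbf{A}\mathbf{\Omega} = \mathbf{A}(\mathbf{A}^{T}\mathbf{A})^{p}\mathbf{\Omega}$ (more precisely, of its leading $k$ columns), after which they must equal one another. The only analytic fact I need is that right-multiplication by an invertible matrix leaves a column space unchanged, together with the observation that the square triangular factors produced by $lu(\cdot)$ and $qr(\cdot)$ are invertible whenever their input has full column rank. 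Since $\mathrm{rank}(\mathbf{A}) \geq l$, the matrix $\mathbf{\Omega}$ is Gaussian, and we work in exact arithmetic, every intermediate matrix below has full column rank with probability one; I will justify this once at the outset, and it guarantees that all of $\mathbf{U}_{y}$, $\mathbf{U}_{1}$, $\mathbf{L}_{b}$, $\mathbf{U}_{2}$, and the triangular factor of $\mathbf{V}$ are invertible.

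For RandLU, step \ref{randlu:3} gives $\mathbf{P}_{r}\mathbf{Y} = \mathbf{L}_{y}\mathbf{U}_{y}$, hence $\mathbf{Y} = \mathbf{P}_{r}^{T}\mathbf{L}_{y}\mathbf{U}_{y}$ and $Range(\mathbf{P}_{r}^{T}\mathbf{L}_{y}) = Range(\mathbf{Y})$ since $\mathbf{U}_{y}$ is invertible. Because the output is $\mathbf{L}_{r} = \mathbf{L}_{y}(:,1:k)\mathbf{L}_{b}$ with $\mathbf{L}_{b}$ invertible, we get $Range(\mathbf{P}_{r}^{T}\mathbf{L}_{r}) = Range(\mathbf{P}_{r}^{T}\mathbf{L}_{y}(:,1:k))$. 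For PowerLU, step \ref{powerLU:step2} produces an orthonormal $\mathbf{V}$ with $(\mathbf{A}^{T}\mathbf{A})^{p}\mathbf{\Omega} = \mathbf{V}\mathbf{R}$ for an invertible upper-triangular $\mathbf{R}$, so $\mathbf{V} = (\mathbf{A}^{T}\mathbf{A})^{p}\mathbf{\Omega}\mathbf{R}^{-1}$ and therefore $\mathbf{A}\mathbf{V} = \mathbf{A}(\mathbf{A}^{T}\mathbf{A})^{p}\mathbf{\Omega}\mathbf{R}^{-1} = \mathbf{Y}\mathbf{R}^{-1}$, using the identity $\mathbf{A}(\mathbf{A}^{T}\mathbf{A})^{p} = (\mathbf{A}\mathbf{A}^{T})^{p}\mathbf{A}$. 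Step \ref{powerlu:step4} gives $\mathbf{P}_{p}\,\mathbf{A}\mathbf{V}(:,1:k) = \mathbf{L}_{1}\mathbf{U}_{1}$, and since the output factor is $\mathbf{L}_{p} = \mathbf{L}_{1}\mathbf{U}_{2}^{T}$ with $\mathbf{U}_{2}^{T}$ invertible, the same reasoning yields $Range(\mathbf{P}_{p}^{T}\mathbf{L}_{p}) = Range(\mathbf{P}_{p}^{T}\mathbf{L}_{1}) = Range(\mathbf{A}\mathbf{V}(:,1:k))$.

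It then remains to identify $Range(\mathbf{P}_{r}^{T}\mathbf{L}_{y}(:,1:k))$ with $Range(\mathbf{A}\mathbf{V}(:,1:k))$. When $k = l$ there is no truncation and both equal $Range(\mathbf{Y})$ immediately from $\mathbf{A}\mathbf{V} = \mathbf{Y}\mathbf{R}^{-1}$. The step I expect to demand the most care is the truncation to the leading $k$ columns, which does not obviously commute with multiplication by $\mathbf{R}^{-1}$ or with the LU step. The resolution is that $\mathbf{R}$ and $\mathbf{U}_{y}$ are both upper triangular: the first $k$ columns of $\mathbf{R}^{-1}$ vanish below row $k$, so $\mathbf{A}\mathbf{V}(:,1:k) = \mathbf{Y}(:,1:k)(\mathbf{R}_{11})^{-1}$ with $\mathbf{R}_{11}$ the invertible leading $k\times k$ block; likewise, reading off the first $k$ columns of $\mathbf{P}_{r}\mathbf{Y} = \mathbf{L}_{y}\mathbf{U}_{y}$ gives $\mathbf{Y}(:,1:k) = \mathbf{P}_{r}^{T}\mathbf{L}_{y}(:,1:k)\,\mathbf{U}_{y,11}$ with $\mathbf{U}_{y,11}$ the invertible leading block of $\mathbf{U}_{y}$. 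Hence both subspaces equal $Range(\mathbf{Y}(:,1:k))$, the span of the first $k$ columns of the shared sketch $(\mathbf{A}\mathbf{A}^{T})^{p}\mathbf{A}\mathbf{\Omega}$, which establishes the claimed equality. Beyond this bookkeeping, the only nontrivial ingredient is the invertibility of the triangular blocks, which follows from the rank hypothesis and the genericity of the Gaussian $\mathbf{\Omega}$.
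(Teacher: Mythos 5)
Your proof is correct, and its core is the same as the paper's: both arguments pull the RandLU and PowerLU lower-triangular factors back to the shared sketch $\mathbf{Y} = \mathbf{A}(\mathbf{A}^{T}\mathbf{A})^{p}\mathbf{\Omega}$ and then cancel invertible square triangular matrices on the right (your $\mathbf{R}$ is the paper's $\mathbf{Z}$, together with $\mathbf{U}_{y}$, $\mathbf{U}_{1}$, $\mathbf{L}_{b}$, $\mathbf{U}_{2}$), using the fact that right-multiplication by an invertible matrix preserves column space. Where you genuinely add something is the truncation. The paper's proof writes $\mathbf{A}\mathbf{V} = \mathbf{P}_{p}^{T}\mathbf{L}_{1}\mathbf{U}_{1}$ and treats $\mathbf{U}_{1}$, $\mathbf{Z}$, $\mathbf{U}_{y}$ as $l \times l$ nonsingular matrices, i.e.\ it implicitly takes $k = l$ and never addresses step \ref{randlu:step4} of Algorithm \ref{alg:rlu} or the restriction to $\mathbf{V}(:,1:k)$ in Algorithm \ref{alg:bpowerlu}, where both algorithms cut their factors down to $k$ columns. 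You handle $k < l$ (the regime the algorithms actually run in, given the oversampling $l = k + q$) by noting that leading-column truncation interacts cleanly with triangular structure: $(\mathbf{Y}\mathbf{Z}^{-1})(:,1:k) = \mathbf{Y}(:,1:k)\,\mathbf{Z}(1:k,1:k)^{-1}$ and $(\mathbf{L}_{y}\mathbf{U}_{y})(:,1:k) = \mathbf{L}_{y}(:,1:k)\,\mathbf{U}_{y}(1:k,1:k)$, so both ranges coincide with $Range(\mathbf{Y}(:,1:k))$. In that sense your write-up is a strictly more complete version of the published proof. One small loose end, which you share with the paper: the invertibility of $\mathbf{L}_{b}$ is justified only by genericity, whereas once $\mathbf{Y}$ has full column rank it follows deterministically — $Range(\mathbf{P}_{r}^{T}\mathbf{L}_{y}(:,1:k)) = Range(\mathbf{Y}(:,1:k)) \subseteq Range(\mathbf{A})$ forces $\mathbf{B} = \mathbf{L}_{y}^{\dagger}\mathbf{P}_{r}\mathbf{A}$ to have full row rank $k$, hence $\mathbf{L}_{b}$ is nonsingular.
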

\begin{proof}
First, we notice from Algorithm \ref{alg:bpowerlu} that we have the following unpivoted QR decomposition
\begin{equation}
    (\mathbf{A}^{T}\mathbf{A})^{p} \mathbf{\Omega}= \mathbf{V}\mathbf{Z},
\end{equation}
where $\mathbf{V}$ is the orthogonal basis for $(\mathbf{A}^{T}\mathbf{A})^{p}\mathbf{\Omega}$. 
Then we have 
\begin{equation}
    \mathbf{Y} = \mathbf{A}(\mathbf{A}^{T}\mathbf{A})^{p} \mathbf{\Omega} = \mathbf{A}\mathbf{VZ} = \mathbf{P}_{p}^{T}\mathbf{L}_{1}\mathbf{U}_{1}\mathbf{Z}.
    \label{eq:yequals}
\end{equation}
From the RandLU Algorithm \ref{alg:rlu}, we have the following: 
\begin{equation}
    \mathbf{P}_{r} \mathbf{Y} = \mathbf{L}_{y}\mathbf{U}_{y}, 
\end{equation}
which means $\mathbf{U}_{y}$ is invertable, and so
\begin{equation}
     \mathbf{L}_{y} = \mathbf{P}_{r} \mathbf{Y} \mathbf{U}_{y}^{-1}.
\end{equation}
Then we replace $\mathbf{Y}$ with the equation (\ref{eq:yequals}):
\begin{equation}
     \mathbf{L}_{y} = \mathbf{P}_{r} \mathbf{P}_{p}^{T}\mathbf{L}_{1}\mathbf{U}_{1}\mathbf{Z}\mathbf{U}_{y}^{-1}.
\end{equation}
Since  the matrices $\mathbf{U}_{1}, \mathbf{Z}, \mathbf{U}_{y}$ are non-singular matrices with size $l\times l$, we now have
\begin{equation}
Range(\mathbf{P}_{r}^{T}\mathbf{L}_{y}) = Range(\mathbf{P}_{p}^{T}\mathbf{L}_{1}).
\end{equation}
The RandLU Algorithm \ref{alg:rlu} implies we have $\mathbf{L}_{r} = \mathbf{L}_{y}\mathbf{L}_{b}$, and similarly the PowerLU Algorithm \ref{alg:bpowerlu} implies $\mathbf{L}_{p} = \mathbf{L}_{1}\mathbf{U}_{2}^{T}$. Since  both $\mathbf{L}_{b}, \mathbf{U}_{2}$ are non-singular, we arrive at 
\begin{equation}
    Range(\mathbf{P}_{r}^{T}\mathbf{L}_{r}) = Range(\mathbf{P}_{p}^{T}\mathbf{L}_{p}).
\end{equation}
\end{proof}


Theorem \ref{acc:t} established the relationship between RandLU and PowerLU. We obtain the following equation (\ref{eq:eqlu}) by using Theorem \ref{acc:t}.  So all the theoretical analysis for randomized LU \cite{shabat2018randomized} can be applied to the PowerLU decomposition: 
\begin{equation}
 \mathbf{P}_{r}^{T}\mathbf{L}_{r}\mathbf{L}_{r}^{\dagger}\mathbf{P}_{r}\mathbf{A} =  \mathbf{P}_{p}^{T}\mathbf{L}_{p}\mathbf{L}_{p}^{\dagger}\mathbf{P}_{p}\mathbf{A} .
 \label{eq:eqlu}
\end{equation}

We observe that the RandLU Algorithm \ref{alg:rlu} accesses the matrix $\mathbf{A}$ a total of $2p + 2$ times.  In much of the literature, accessing $\mathbf{A}$ is often referred to as a pass of $\mathbf{A}$.  However, only $2p + 1$ passes of $\mathbf{A}$ are needed in PowerLU. Therefore, for the same value $p$, PowerLU will have one less pass than RandLU and will therefore be less accurate. In the rest of the section, we will extend the PowerLU Algorithm \ref{alg:bpowerlu} so that works with any number of passes of the matrix $\mathbf{A}$. 

\subsection{Generalized PowerLU Decomposition}
As we discussed in the last section, PowerLU has an odd number passes of the matrix $\mathbf{A}$. For some cases, accessing the matrix will be expensive. However, to increase the accuracy of the computation, we have to increment by 2 passes each time due to multiplication by $\mathbf{A}^{T}\mathbf{A}$. In \cite{bjarkason2019pass}, the authors proposed the generalized randomized SVD algorithm, which allows any number of passes $v \geq 2$ by using generalized randomized subspace iteration.  A similar algorithm was proposed in \cite{feng2018fast}. Here we use $v$ to denote the number of passes of $\mathbf{A}$.  In this section, we will introduce the generalized PowerLU algorithm which can have any number passes $v \geq 2$ of $\mathbf{A}$.

The idea for the generalized PowerLU is very simple. Let $p = \lfloor\frac{v-1}{2}\rfloor$ when $v$ is odd in Algorithm \ref{alg:bpowerlu}. We modify steps \ref{powerLU:step1}-\ref{powerLU:step2} of Algorithm \ref{alg:bpowerlu} when $v \geq 2$ is even as follows:
\begin{enumerate}
    \item Generate a random Gaussian matrix $\mathbf{\Omega} \in \R^{m \times l}$;
    \item Compute $(\mathbf{A}^{T}\mathbf{A})^{p}  \mathbf{A}^{T}\mathbf{\Omega}$  and its orthonormal basis $\mathbf{V}$; 
\end{enumerate}
 We describe the power iteration reorthogonalization procedure for the generalized PowerLU in Algorithm \ref{alg:gpowerlu}.   Here we need only replace steps \ref{powerLU:step1}-\ref{powerLU:step2} in Algorithm \ref{alg:bpowerlu} by Algorithm \ref{alg:gpowerlu}, which allows an arbitrary number of passes $v \geq 2$ of the matrix $\mathbf{A}$.  
 
 When $v$ is odd, then the error bound for generalized PowerLU can be estimated by Theorem \ref{thm:approxlu} if we set $p = \lfloor\frac{v-1}{2}\rfloor$. When $v$ is even, the error bound for the generalized PowerLU is given by Theorem \ref{thm:approxlu2} \cite{bjarkason2019pass}.

\begin{theorem}
\label{thm:approxlu2}
Let $\mathbf{A}\in \R^{m \times n}$ with nonnegative singular values $\sigma_{1} \geq \sigma_{2} 
\geq \cdots \geq \sigma_{min(m,n)}$.  Let $k \geq 2$ be the target rank, $q \geq 2$ be an oversampling parameter, with $k + q 
\leq min(m,n)$. Draw a random Gaussian matrix $\mathbf{\Omega} \in \R^{n \times (k+q)}$ and set $\mathbf{Y} = (\mathbf{A}^{T}\mathbf{A})^{p}\mathbf{A}^{T}\mathbf{\Omega}$ for $p \geq 1$. Let $\mathbf{V} \in \R^{n \times (k + q)}$ be an orthonormal matrix which forms the basis for the range of $\mathbf{Y}$. Then
\begin{equation}
\begin{split}
    & \mathbb{E}[\|\mathbf{A} - \mathbf{A}\mathbf{V}\mathbf{V}^{T}\|]  = \mathbb{E}[\|\mathbf{A}^{T} - \mathbf{V}\mathbf{V}^{T}\mathbf{A}^{T}\|] \\& 
    \leq
    \left[\left(1 + \sqrt{\frac{k}{q-1}}\sigma_{k+1}^{2p+1}\right)\sigma_{k+1}^{2p+1} + \frac{e\sqrt{k+q}}{q}\left(\sum_{j>k}\sigma_{j}^{2(2p+1)}\right)^{1/2}\right]^{1/(2p+1)}. 
\end{split}
\end{equation}
\end{theorem}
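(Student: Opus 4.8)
The plan is to recognize that Theorem~\ref{thm:approxlu2} is the exact parallel of Theorem~\ref{thm:approxlu}, differing only in that the effective exponent on the singular values is the odd number $2p+1$ rather than the even number $2p$. The starting point is the transpose identity $\|\mathbf{A}-\mathbf{A}\mathbf{V}\mathbf{V}^{T}\| = \|\mathbf{A}^{T}-\mathbf{V}\mathbf{V}^{T}\mathbf{A}^{T}\|$, which is immediate because the spectral norm is invariant under transposition and $\mathbf{I}-\mathbf{V}\mathbf{V}^{T}$ is an orthogonal projector. Writing $\mathbf{C}=\mathbf{A}^{T}$, the generating matrix becomes $\mathbf{Y} = (\mathbf{A}^{T}\mathbf{A})^{p}\mathbf{A}^{T}\mathbf{\Omega} = (\mathbf{C}\mathbf{C}^{T})^{p}\mathbf{C}\mathbf{\Omega}$, so that $\mathbf{V}$ is precisely the orthonormal basis produced by the standard randomized range finder with $p$ steps of power iteration applied to $\mathbf{C}$, and the quantity to be bounded is $\|\mathbf{C}-\mathbf{V}\mathbf{V}^{T}\mathbf{C}\|$. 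Since $\mathbf{A}$ and $\mathbf{C}$ share the singular values $\sigma_{j}$, it suffices to establish the expected-error bound for this range finder, now carrying total exponent $2p+1$. Note this cannot be obtained as a black box from Theorem~\ref{thm:approxlu}, since $(\mathbf{C}^{T}\mathbf{C})^{p'} = (\mathbf{A}\mathbf{A}^{T})^{p'}$ always yields an even power; the odd power forces us to re-run the argument.

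For the deterministic core, I would introduce the SVD $\mathbf{A}=\mathbf{U}\mathbf{\Sigma}\mathbf{V}_{A}^{T}$, split $\mathbf{\Sigma}=\diag(\mathbf{\Sigma}_{1},\mathbf{\Sigma}_{2})$ into its leading $k$ and trailing blocks with the conformal split $\mathbf{V}_{A}=[\mathbf{V}_{A,1}\ \mathbf{V}_{A,2}]$, and set the rotated test matrix $\tilde{\mathbf{\Omega}}=\mathbf{U}^{T}\mathbf{\Omega}$, which is again Gaussian by the rotational invariance of the Gaussian ensemble; partition $\tilde{\mathbf{\Omega}}$ into its top $k$ rows $\tilde{\mathbf{\Omega}}_{1}$ and remaining rows $\tilde{\mathbf{\Omega}}_{2}$. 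The first key step is the power-iteration norm inequality (\cite{halko2011finding}, Prop.~8.6): with $\mathbf{P}=\mathbf{I}-\mathbf{V}\mathbf{V}^{T}$, $\|\mathbf{P}\mathbf{C}\| \leq \|\mathbf{P}(\mathbf{C}\mathbf{C}^{T})^{p}\mathbf{C}\|^{1/(2p+1)}$, which transfers the problem to the matrix $\mathbf{D}=(\mathbf{C}\mathbf{C}^{T})^{p}\mathbf{C}$ that shares the singular vectors of $\mathbf{C}$ and has singular values $\sigma_{j}^{2p+1}$, at the cost of the outer exponent $1/(2p+1)$. Applying the standard deterministic projection bound to $\mathbf{D}$ (whose range-finder projector is generated by $\mathbf{D}\mathbf{\Omega}=\mathbf{Y}$, so $\mathbf{P}$ is the correct projector) gives $\|\mathbf{P}\mathbf{D}\|^{2} \leq \|\mathbf{\Sigma}_{2}^{2p+1}\|^{2}+\|\mathbf{\Sigma}_{2}^{2p+1}\tilde{\mathbf{\Omega}}_{2}\tilde{\mathbf{\Omega}}_{1}^{\dagger}\|^{2}$, valid whenever $\tilde{\mathbf{\Omega}}_{1}$ has full row rank, which holds almost surely.

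The final step is to take the expectation over the Gaussian $\tilde{\mathbf{\Omega}}$. Here I would use the independence of $\tilde{\mathbf{\Omega}}_{1}$ and $\tilde{\mathbf{\Omega}}_{2}$, the identity $\mathbb{E}\|\tilde{\mathbf{\Omega}}_{1}^{\dagger}\|_{F}^{2}=k/(q-1)$ for a $k\times(k+q)$ Gaussian matrix, and the standard expected-norm estimates for Gaussian matrices, to bound $\mathbb{E}\|\mathbf{\Sigma}_{2}^{2p+1}\tilde{\mathbf{\Omega}}_{2}\tilde{\mathbf{\Omega}}_{1}^{\dagger}\|$ in terms of $\sqrt{k/(q-1)}\,\sigma_{k+1}^{2p+1}$ and $(e\sqrt{k+q}/q)(\sum_{j>k}\sigma_{j}^{2(2p+1)})^{1/2}$; these are exactly the two terms appearing inside the bracket. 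Finally, because $x\mapsto x^{1/(2p+1)}$ is concave, Jensen's inequality (equivalently Hölder, as in the proof of Theorem~\ref{thm:approxlu}) moves the expectation inside the exponent and yields the stated closed form. I expect the main obstacle to be this last expectation/concavity step: one must split $\mathbb{E}\|\mathbf{P}\mathbf{D}\|$ carefully so that the leading $\sigma_{k+1}^{2p+1}$ appears both as the dominant term and inside the factor $(1+\sqrt{k/(q-1)}\,\sigma_{k+1}^{2p+1})$, and then justify pulling the $1/(2p+1)$ power outside the expectation without loosening the bound. The SVD reduction and the deterministic inequalities are routine once the transpose identity is in place.
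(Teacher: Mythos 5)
Your proposal is mathematically sound, but there is nothing in the paper to compare it against: the paper never proves Theorem \ref{thm:approxlu2}. Like Theorem \ref{thm:approxlu}, it is imported as a known result from \cite{bjarkason2019pass}, and the paper's entire treatment is that citation. What you have written is, in effect, a reconstruction of the proof in the cited literature: the transpose identity $\|\mathbf{A}-\mathbf{A}\mathbf{V}\mathbf{V}^{T}\| = \|\mathbf{A}^{T}-\mathbf{V}\mathbf{V}^{T}\mathbf{A}^{T}\|$, the observation that $\mathbf{Y}=(\mathbf{C}\mathbf{C}^{T})^{p}\mathbf{C}\mathbf{\Omega}$ for $\mathbf{C}=\mathbf{A}^{T}$ so that the range finder carries the odd total exponent $2p+1$, the power-iteration inequality (Proposition 8.6 of \cite{halko2011finding}), the deterministic bound with the rotated Gaussian split $\tilde{\mathbf{\Omega}}_{1},\tilde{\mathbf{\Omega}}_{2}$, the Gaussian expectation estimates, and H\"older/Jensen to pull the expectation inside the power $1/(2p+1)$. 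This is precisely the proof of Corollary 10.10 in \cite{halko2011finding} applied to $\mathbf{A}^{T}$, which is also the engine behind the corresponding theorem in \cite{bjarkason2019pass}. Your remark that the odd power cannot be obtained as a black box from Theorem \ref{thm:approxlu} (since $(\mathbf{C}^{T}\mathbf{C})^{p'}$ always produces an even exponent) is correct and is the one genuinely non-routine observation in the reduction.

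Two caveats. First, the argument you sketch yields the factor $\left(1+\sqrt{k/(q-1)}\right)\sigma_{k+1}^{2p+1}$, whereas the theorem as printed has $\left(1+\sqrt{k/(q-1)}\,\sigma_{k+1}^{2p+1}\right)\sigma_{k+1}^{2p+1}$, with an extra $\sigma_{k+1}^{2p+1}$ inside the parenthesis; the printed form appears to be a transcription artifact (the same anomaly appears in Theorem \ref{thm:approxlu}), so your proof will produce the standard form rather than the printed one --- you should flag this rather than contort the final expectation step to match it, which is what your closing paragraph seems prepared to attempt. Second, the statement's $\mathbf{\Omega}\in\R^{n\times(k+q)}$ is dimensionally inconsistent with the product $\mathbf{A}^{T}\mathbf{\Omega}$; your reduction implicitly takes $\mathbf{\Omega}\in\R^{m\times(k+q)}$, which is what the generalized power iteration (Algorithm \ref{alg:gpowerlu}) actually generates for an even number of passes, and that correction should be made explicit rather than silent.
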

\begin{algorithm}[htb] 
\begin{algorithmic}[1]
\REQUIRE Given a matrix $\mathbf{A}\ in \R^{m \times n}$, $l < min(m,n)$ and
$v \geq 2$.
\ENSURE Orthonormal basis $\mathbf{V} \in \mathbb{R}^{n \times l}$ for matrix $(\mathbf{A}^{T}\mathbf{A})^{\lfloor\frac{v-1}{2}\rfloor}\mathbf{A}^{T}\mathbf{\Omega}$ or $(\mathbf{A}^{T}\mathbf{A})^{\lfloor\frac{v-1}{2}\rfloor}\mathbf{\Omega}$.
\IF{$v$ even}
    \STATE Generate a random Gaussian matrix $\mathbf{\Omega}$ with size $m \times l$;
    \IF{$v > 2$}
    \STATE $[\mathbf{V}, \sim] = lu(\mathbf{A^{T}\mathbf{\Omega}})$; \COMMENT{$\mathcal{C}_{mm}mnl + \mathcal{C}_{lu}nl^{2}$} 
    \ELSE
    \STATE $[\mathbf{V}, \sim] = qr(\mathbf{A^{T}\Omega})$;
    \COMMENT{$\mathcal{C}_{mm}mnl + \mathcal{C}_{qr}nl^{2}$} 
    \ENDIF
\ELSE
 \STATE Generate a random Gaussian matrix $\mathbf{V}$ with size $n \times l$;
\ENDIF
\FOR{$i = 1:\lfloor\frac{v-1}{2}\rfloor$}
    \STATE $[\mathbf{V}, \sim] = lu(\mathbf{AV})$; \COMMENT{$\mathcal{C}_{mm}mnl + \mathcal{C}_{lu}ml^{2}$}      
    \IF{$i ==\lfloor\frac{v-1}{2}\rfloor$}
    \STATE $[\mathbf{V}, \sim] = qr(\mathbf{A}^{T}\mathbf{V})$; \COMMENT{$\mathcal{C}_{mm}mnl + \mathcal{C}_{qr}nl^{2}$} 
    \ELSE
    \STATE $[\mathbf{V}, \sim] = lu(\mathbf{A}^{T}\mathbf{V})$;  \COMMENT{$\mathcal{C}_{mm}mnl + \mathcal{C}_{lu}nl^{2}$}            
    \ENDIF
\ENDFOR
 \end{algorithmic}
\caption{Generalized Power Iteration Reorthogonalization for Algorithm \ref{alg:bpowerlu}}
\label{alg:gpowerlu}
\end{algorithm}

Its easy to see that if the power iteration  is computed by Algorithm \ref{alg:gpowerlu},  the cost for the generalized PowerLU is the same as PowerLU when $v$ is odd, 
When $v$ is even, an extra factorization (LU or QR) and an extra matrix-matrix multiplication will be required.  
Subsequently PowerLU will refer to the generalized PowerLU decomposition just presented.
Numerical experiments to show the efficiency of our proposed algorithm are presented in \S \ref{sec:sec6}.  In the next section, we will discuss how to create a version of PowerLU to solve the fixed precision problem.

\section{PowerLU\_FP: A Method for Fixed Precision Low-Rank Matrix Approximation} \label{sec:sec4}
We now introduce a version of the PowerLU algorithm
used to solve the fixed precision low-rank matrix approximation problem. PowerLU\_FP is based on an efficient blocked adaptive rank determination algorithm proposed in this section. 
Our proposed blocked adaptive rank determination algorithm is inspired by the blocked random QB algorithm in \cite{martinsson2016randomized}.  In \cite{martinsson2016randomized}, the authors try to build an orthonormal basis $\mathbf{Q}$ of $\mathbf{A} \in \R^{m \times n}$ incrementally so that for a given tolerance, $\epsilon$,
\begin{equation}
    \|\mathbf{A} - \mathbf{Q}\mathbf{Q}^{T}\mathbf{A}\|_{F} \leq \epsilon.
    \label{eq:adaperror}
\end{equation}
In \cite{yu2018efficient}, the authors present an improved version of the blocked random QB algorithm, which uses an inexpensive error calculation as the stopping criterion. They compute a sufficiently large orthonormal basis, $\mathbf{V}$, for $\mathbf{A}^{T}$ when the power iteration exponent $p \geq 1$.
However, their algorithm also tries to build $\mathbf{Q}$ such that (\ref{eq:adaperror}) holds. They fail to use the properties of (\ref{eq:rightsingular}) to determine the rank of $\mathbf{A}$.

In rest of this section, we will partition the orthonormal basis of $\mathbf{A}^{T}$ into blocks, and propose a new blocked adaptive rank determination Algorithm \ref{alg:adaptivewithoutupdating}. We organize PowerLU\_FP to solve the fixed precision problem based on Algorithm \ref{alg:adaptivewithoutupdating}.  

\subsection{Blocked Adaptive Rank Determination for the Fixed Precision Problem}
\label{sec:blocking}

We will describe our blocked adaptive rank determination algorithm. 
The partition is inspired by the blocked random QB algorithm in \cite{martinsson2016randomized}.  

For a given matrix $\mathbf{A}\in \R^{m \times n}$ and target rank $k$, we compute an orthonormal basis, $\mathbf{V} \in \R ^{n \times l} $, for $\mathbf{A}^{T}$,  by Algorithm \ref{alg:gpowerlu} with sufficient large $l$, where we set $l =k = sb$.   Here we introduce $b$ as the block size, and $s$ as the number of blocks.  Then, we can partition the matrix $\mathbf{V}$ into blocks $\{\mathbf{V}_{j}\}_{j=1}^{s}$, each of size $n \times b$, 
\begin{equation}
     \mathbf{V} = \left[\mathbf{V}_{1}, \mathbf{V}_{2}, \cdots, \mathbf{V}_{s}\right].
     \label{eq:pv}
\end{equation}
To determine the rank adaptively, we should build a block version of the method to compute an approximate error 
\begin{equation}
    \|\mathbf{A} -\mathbf{A}\mathbf{V}\mathbf{V}^{T}\|_{F} .
    \label{eq:apperror}
\end{equation}
Initially, we set
\begin{equation}
\label{eq:b1}
    \mathbf{A}^{(0)} = \mathbf{A},
\end{equation}
and iterate for $i = 1,2,\cdots, s$ as follows
\begin{equation}
        \label{eq:b2}
        \mathbf{A}^{(i)}  =    \mathbf{A}^{(i-1)} -  \mathbf{A}^{(i-1)} \mathbf{V}_{i}\mathbf{V}_{i}^{T}.
    \end{equation}
Equations (\ref{eq:b1})-(\ref{eq:b2}) show us how to compute the error (\ref{eq:apperror}) via blocks of $\mathbf{V}$ . 

Next, we need to verify
\begin{equation}
    \|\mathbf{A}^{(s)}\|_{F} = \|\mathbf{A} -\mathbf{A}\mathbf{V}\mathbf{V}^{T}\|_{F}.
    \label{eq:equalequal}
\end{equation}

We state Theorem \ref{thm:22} to demonstrate (\ref{eq:equalequal}).
\begin{theorem}
\label{thm:22}
Let $\mathbf{A}\in \R^{ m\times n}$, $b$ is the block size, and $s$ is the number of blocks.  Suppose that the rank of $\mathbf{A}$ is at least $sb$. Let $\mathbf{V}$ be the orthonormal basis for $\mathbf{A}^{T}$, and we partition it as in (\ref{eq:pv}), with each $\mathbf{V}_{i}$ of size $n \times b$. Let $\{ \mathbf{A}^{(j)}\}_{j=1}^{i}$  be defined by 
(\ref{eq:b1})-(\ref{eq:b2}). We set:

\begin{equation}
    \label{eq:projection}
    \mathbf{P}_{i} = \sum_{j=1}^{i}\mathbf{V}_{j}\mathbf{V}_{j}^{T}.
\end{equation}
Then for every $i = 1,2,\cdots s$, we have
\begin{enumerate}[label=(\alph*)]
    \item The $\mathbf{P}_{i} $ are orthogonal projections\label{thm:aa}.
    \item $\mathbf{A}^{(i)} = \mathbf{A}(\mathbf{I}-\mathbf{P}_{i})$. \label{thm:bb} 
 
\end{enumerate}
\end{theorem}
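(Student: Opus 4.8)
The plan is to derive both parts from the single structural fact that $\mathbf{V}$ has orthonormal columns, which forces the blocks to be mutually orthonormal. Since $\mathbf{V}^{T}\mathbf{V} = \mathbf{I}_{sb}$ and $\mathbf{V} = [\mathbf{V}_{1}, \ldots, \mathbf{V}_{s}]$, I would first record the block identity $\mathbf{V}_{j}^{T}\mathbf{V}_{k} = \delta_{jk}\mathbf{I}_{b}$ for all $j,k \leq s$. Everything else is a short consequence of this. The rank hypothesis $\mathrm{rank}(\mathbf{A}) \geq sb$ enters only to guarantee that $\mathbf{V}$ genuinely carries $sb$ orthonormal columns spanning a subspace of $Range(\mathbf{A}^{T})$, so that the partition in (\ref{eq:pv}) is well defined.

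For part (a), I would verify the two defining properties of an orthogonal projection directly. Symmetry $\mathbf{P}_{i}^{T} = \mathbf{P}_{i}$ is immediate because each summand $\mathbf{V}_{j}\mathbf{V}_{j}^{T}$ is symmetric. For idempotence I would expand $\mathbf{P}_{i}^{2} = \sum_{j,k \leq i}\mathbf{V}_{j}(\mathbf{V}_{j}^{T}\mathbf{V}_{k})\mathbf{V}_{k}^{T}$ and use the block identity to annihilate all cross terms with $j \neq k$, leaving $\sum_{j \leq i}\mathbf{V}_{j}\mathbf{V}_{j}^{T} = \mathbf{P}_{i}$. Equivalently, one can write $\mathbf{P}_{i} = \mathbf{W}_{i}\mathbf{W}_{i}^{T}$ with $\mathbf{W}_{i} = [\mathbf{V}_{1}, \ldots, \mathbf{V}_{i}]$, a matrix with orthonormal columns, and invoke the projector property $\mathbf{P}_{\mathbf{A}} = \mathbf{A}\mathbf{A}^{T}$ for orthonormal columns recalled in the preliminaries.

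For part (b), I would induct on $i$. The base case is just the definition of $\mathbf{A}^{(1)}$ together with $\mathbf{P}_{1} = \mathbf{V}_{1}\mathbf{V}_{1}^{T}$. For the inductive step I would rewrite the recursion (\ref{eq:b2}) as $\mathbf{A}^{(i)} = \mathbf{A}^{(i-1)}(\mathbf{I} - \mathbf{V}_{i}\mathbf{V}_{i}^{T})$, substitute the inductive hypothesis $\mathbf{A}^{(i-1)} = \mathbf{A}(\mathbf{I} - \mathbf{P}_{i-1})$, and expand the product. The key cancellation is $\mathbf{P}_{i-1}\mathbf{V}_{i} = \mathbf{0}$: each term $\mathbf{V}_{j}(\mathbf{V}_{j}^{T}\mathbf{V}_{i})$ with $j < i$ vanishes by orthogonality. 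This collapses $(\mathbf{I} - \mathbf{P}_{i-1})(\mathbf{I} - \mathbf{V}_{i}\mathbf{V}_{i}^{T})$ to $\mathbf{I} - (\mathbf{P}_{i-1} + \mathbf{V}_{i}\mathbf{V}_{i}^{T}) = \mathbf{I} - \mathbf{P}_{i}$, which is exactly the claim.

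I do not expect a genuine obstacle, since the whole argument rests on block orthonormality. The one point I would state carefully is that the telescoping in part (b) accumulates into the cumulative projector $\mathbf{P}_{i}$ rather than the last block alone, so I would make the step $\mathbf{P}_{i-1}\mathbf{V}_{i} = \mathbf{0}$ fully explicit to justify that $(\mathbf{I} - \mathbf{P}_{i-1})(\mathbf{I} - \mathbf{V}_{i}\mathbf{V}_{i}^{T}) = \mathbf{I} - \mathbf{P}_{i}$. With (b) in hand, the identity (\ref{eq:equalequal}) then follows immediately at $i = s$ because $\mathbf{P}_{s} = \mathbf{V}\mathbf{V}^{T}$.
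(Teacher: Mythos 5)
Your proposal is correct and follows essentially the same route as the paper: block orthonormality $\mathbf{V}_{j}^{T}\mathbf{V}_{k} = \delta_{jk}\mathbf{I}$ gives idempotence of $\mathbf{P}_{i}$ for part (a), and induction on $i$ with the cancellation $\mathbf{P}_{i-1}\mathbf{V}_{i} = \mathbf{0}$ gives part (b). If anything, you are slightly more careful than the paper, which does not explicitly verify symmetry of $\mathbf{P}_{i}$ or spell out the cancellation step that collapses $(\mathbf{I}-\mathbf{P}_{i-1})(\mathbf{I}-\mathbf{V}_{i}\mathbf{V}_{i}^{T})$ to $\mathbf{I}-\mathbf{P}_{i}$.
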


\begin{proof}
The proof is as follows.

\begin{enumerate}
    \item To prove \ref{thm:aa}, we need to prove that
    \begin{equation}
        \mathbf{P}_{i}^{2} = \mathbf{P}_{i}.
    \end{equation}
    Since $\mathbf{V}$ has orthonormal columns,  $\mathbf{V}_{j}^{T}\mathbf{V}_{k} 
    = \delta_{jk} \mathbf{I}$. Therefore,
    \begin{equation}
        \mathbf{P}_{i}^{2} = (\sum_{j=1}^{i}\mathbf{V}_{j}\mathbf{V}_{j}^{T})^{2} = \mathbf{P}_{i}. 
    \end{equation}
    So \ref{thm:aa} is proved. 
    \item We prove \ref{thm:bb} by induction on $i$. Suppose \ref{thm:bb} holds for $i-1$, then
    \begin{equation}
        \mathbf{A}^{(i-1)} = \mathbf{A}(\mathbf{I}-\mathbf{P}_{i-1}).
    \end{equation}
    We can obtain $\mathbf{A}^{(i)}$ by (\ref{eq:b2})
    \begin{equation}
    \begin{split}
        \mathbf{A}^{(i)} &=\mathbf{A}^{(i-1)}-  \mathbf{A}^{(i-1)}\mathbf{V}_{i}\mathbf{V}_{i}^{T} \\&=
        \mathbf{A}^{(i-1)}(\mathbf{I}- \mathbf{V}_{i}\mathbf{V}_{i}^{T})
        \\&=  \mathbf{A}(\mathbf{I}-\mathbf{P}_{i-1})(\mathbf{I} - 
        \mathbf{V}_{i}\mathbf{V}_{i}^{T}) \\&=
        \mathbf{A}(\mathbf{I}-(\mathbf{V}_{i}\mathbf{V}_{i}^{T} + \mathbf{P}_{i-1})) = \mathbf{A}(\mathbf{I}-\mathbf{P}_{i}).
    \end{split}
    \end{equation}
    The last stop follows from (\ref{eq:projection}), and so \ref{thm:bb} is proved. 
\end{enumerate}

\end{proof}
Using Theorem \ref{thm:22}, we verify (\ref{eq:equalequal}) as follows:
\begin{equation}
    \|\mathbf{A}^{(s)}\|_{F} = \|\mathbf{A} - \mathbf{A}\mathbf{P}_{s}\|_{F} = \|\mathbf{A} - \mathbf{A}\sum_{j=1}^{s}\mathbf{V}_{j}\mathbf{V}_{j}^{T}\|_{F} = \|\mathbf{A} - \mathbf{A}\mathbf{V}\mathbf{V}^{T}\|_{F}.
\end{equation}
Now we can solve the fixed precision low-rank approximation for a given tolerance, $\epsilon$.  We now use the remainder of  $\mathbf{A}$, $\mathbf{A}^{(i)}$, to exit the loop when the tolerance is satisfied. We stop the computation when $\|\mathbf{A}^{(i)}\|_{F}^2 < \epsilon^{2}\|\textbf{A}\|_{F}^{2}$.  We can use steps (\ref{eq:b1}) - (\ref{eq:b2}) to determine the rank of $\mathbf{A}$. However, we need to update $\mathbf{A}^{(i)}$.  This is costly, especially when $\mathbf{A}$ is a sparse matrix (fill-in will occur when updating the matrix) \cite{yu2018efficient}. In \cite{yu2018efficient}, the authors proposed an error indicator to avoid updating $\mathbf{A}^{(i)}$.  In this section, we will introduce a similar error indicator, which can estimate the error quickly without updating.
\begin{theorem}
Let $\mathbf{A}$ be an $m \times n$ matrix, and $\mathbf{V}$ is an $n\times b$ orthogonal column matrix (i.e., $\mathbf{V}^{T}\mathbf{V} = \mathbf{I}_{b}$ ). Suppose $b < n$ and $\mathbf{B} = \mathbf{AV}$. Then 
\begin{equation}
    \|\mathbf{A} - \mathbf{B}\mathbf{V}^{T}\|_{F}^{2} = \|\mathbf{A}\|_{F}^{2} - \|\mathbf{B}\|_{F}^{2}. 
    \label{eq:errorindicator}
\end{equation}
\label{thm:errorindicator}
\end{theorem}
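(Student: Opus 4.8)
The plan is to recognize that, because $\mathbf{V}^{T}\mathbf{V} = \mathbf{I}_{b}$, the matrix $\mathbf{P} = \mathbf{V}\mathbf{V}^{T}$ is an orthogonal projection (this is exactly the third bullet of the Orthogonal Projection subsection: $\mathbf{P}_{\mathbf{V}} = \mathbf{V}\mathbf{V}^{T}$ when $\mathbf{V}$ has orthonormal columns, so $\mathbf{P}^{T}=\mathbf{P}$ and $\mathbf{P}^{2}=\mathbf{P}$). Since $\mathbf{B}\mathbf{V}^{T} = \mathbf{A}\mathbf{V}\mathbf{V}^{T} = \mathbf{A}\mathbf{P}$, the residual is $\mathbf{A} - \mathbf{B}\mathbf{V}^{T} = \mathbf{A}(\mathbf{I} - \mathbf{P})$. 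I would then establish the identity by a Pythagorean argument in the Frobenius inner product $\langle \mathbf{X}, \mathbf{Y}\rangle = \mathrm{tr}(\mathbf{X}^{T}\mathbf{Y})$, for which $\|\mathbf{X}\|_{F}^{2} = \langle \mathbf{X},\mathbf{X}\rangle$.

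Concretely, I would decompose $\mathbf{A} = \mathbf{A}\mathbf{P} + \mathbf{A}(\mathbf{I}-\mathbf{P})$ and check that the two summands are Frobenius-orthogonal: using $\mathbf{P}^{T} = \mathbf{P}$, $\mathbf{P}^{2}=\mathbf{P}$, and the cyclic property of the trace,
\begin{equation}
\langle \mathbf{A}\mathbf{P}, \mathbf{A}(\mathbf{I}-\mathbf{P})\rangle = \mathrm{tr}\!\left(\mathbf{P}\mathbf{A}^{T}\mathbf{A}(\mathbf{I}-\mathbf{P})\right) = \mathrm{tr}\!\left(\mathbf{A}^{T}\mathbf{A}(\mathbf{P}-\mathbf{P}^{2})\right) = 0.
\end{equation}
Hence $\|\mathbf{A}\|_{F}^{2} = \|\mathbf{A}\mathbf{P}\|_{F}^{2} + \|\mathbf{A}(\mathbf{I}-\mathbf{P})\|_{F}^{2}$, which rearranges to $\|\mathbf{A}(\mathbf{I}-\mathbf{P})\|_{F}^{2} = \|\mathbf{A}\|_{F}^{2} - \|\mathbf{A}\mathbf{P}\|_{F}^{2}$. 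The last step is to identify $\|\mathbf{A}\mathbf{P}\|_{F} = \|\mathbf{B}\|_{F}$: since $\mathbf{A}\mathbf{P} = \mathbf{B}\mathbf{V}^{T}$ and $\mathbf{V}^{T}\mathbf{V} = \mathbf{I}_{b}$,
\begin{equation}
\|\mathbf{B}\mathbf{V}^{T}\|_{F}^{2} = \mathrm{tr}\!\left(\mathbf{V}\mathbf{B}^{T}\mathbf{B}\mathbf{V}^{T}\right) = \mathrm{tr}\!\left(\mathbf{B}^{T}\mathbf{B}\,\mathbf{V}^{T}\mathbf{V}\right) = \mathrm{tr}(\mathbf{B}^{T}\mathbf{B}) = \|\mathbf{B}\|_{F}^{2},
\end{equation}
which yields (\ref{eq:errorindicator}).

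Alternatively, one can bypass the Pythagorean framing and expand $\|\mathbf{A}(\mathbf{I}-\mathbf{P})\|_{F}^{2} = \mathrm{tr}\!\left((\mathbf{I}-\mathbf{P})\mathbf{A}^{T}\mathbf{A}(\mathbf{I}-\mathbf{P})\right)$ directly, collapse $(\mathbf{I}-\mathbf{P})^{2} = \mathbf{I}-\mathbf{P}$ by idempotency, and then split the resulting trace into $\mathrm{tr}(\mathbf{A}^{T}\mathbf{A}) - \mathrm{tr}(\mathbf{A}^{T}\mathbf{A}\mathbf{V}\mathbf{V}^{T})$. Either route is elementary, so there is no genuine obstacle—only trace bookkeeping. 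The one point that must not be glossed over is that both the idempotency $\mathbf{P}^{2}=\mathbf{P}$ and the reduction $\|\mathbf{B}\mathbf{V}^{T}\|_{F} = \|\mathbf{B}\|_{F}$ rely on $\mathbf{V}^{T}\mathbf{V} = \mathbf{I}_{b}$ (orthonormal columns), and emphatically \emph{not} on $\mathbf{V}\mathbf{V}^{T}$ equalling $\mathbf{I}_{n}$, which fails whenever $b < n$. Thus the hypotheses $b < n$ and $\mathbf{V}^{T}\mathbf{V} = \mathbf{I}_{b}$ are precisely what make the cancellations legitimate, and this identity is what later lets us monitor $\|\mathbf{A}-\mathbf{A}\mathbf{V}\mathbf{V}^{T}\|_{F}$ without ever forming or updating the residual $\mathbf{A}^{(i)}$.
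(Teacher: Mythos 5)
Your proof is correct, but it takes a genuinely different route from the paper's. The paper never introduces the projector $\mathbf{P}=\mathbf{V}\mathbf{V}^{T}$: it expands the Gram matrix of the residual directly,
\begin{equation}
(\mathbf{A}-\mathbf{B}\mathbf{V}^{T})(\mathbf{A}-\mathbf{B}\mathbf{V}^{T})^{T}
= \mathbf{A}\mathbf{A}^{T} - \mathbf{A}\mathbf{V}\mathbf{B}^{T} - \mathbf{B}\mathbf{V}^{T}\mathbf{A}^{T} + \mathbf{B}\mathbf{V}^{T}\mathbf{V}\mathbf{B}^{T},
\end{equation}
and uses $\mathbf{A}\mathbf{V}=\mathbf{B}$ and $\mathbf{V}^{T}\mathbf{V}=\mathbf{I}_{b}$ to collapse the last three terms to $-\mathbf{B}\mathbf{B}^{T}$, obtaining the \emph{matrix} identity $(\mathbf{A}-\mathbf{B}\mathbf{V}^{T})(\mathbf{A}-\mathbf{B}\mathbf{V}^{T})^{T}=\mathbf{A}\mathbf{A}^{T}-\mathbf{B}\mathbf{B}^{T}$; the norm statement then follows from linearity of the trace alone, with no need for its cyclic property. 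You instead split $\mathbf{A}=\mathbf{A}\mathbf{P}+\mathbf{A}(\mathbf{I}-\mathbf{P})$ and invoke Pythagoras in the Frobenius inner product, which requires symmetry and idempotency of $\mathbf{P}$, the cyclic property of the trace, and the separate identification $\|\mathbf{B}\mathbf{V}^{T}\|_{F}=\|\mathbf{B}\|_{F}$. What your framing buys is the geometric picture---the residual is the Frobenius-orthogonal complement of the projection, a fact valid for any orthogonal projector, and the conceptual reason the error indicator in Theorem \ref{thm:errorerror} works---at the cost of slightly more machinery; the paper's expansion is shorter and yields the strictly stronger matrix-level identity. Note also that your ``alternative route'' at the end (expand the trace, collapse by idempotency) is essentially the paper's proof, modulo doing the cancellation inside rather than outside the trace, and your closing remark about where $\mathbf{V}^{T}\mathbf{V}=\mathbf{I}_{b}$ (as opposed to $\mathbf{V}\mathbf{V}^{T}=\mathbf{I}_{n}$) is used matches the paper's hypotheses exactly.
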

\begin{proof}
Our proof is similar to that of Theorem 1 in \cite{yu2018efficient}. By the properties of the Frobenius norm, for any matrix $\mathbf{M}$, we have
\begin{equation}
    \|\mathbf{M}\|_{F}^{2} = tr(\mathbf{M}\mathbf{M}^{T}), 
\end{equation}
where $tr(\cdot)$ is the trace of the matrix. Then we compute
\begin{equation}
\begin{split}
    (\mathbf{A} - \mathbf{B}\mathbf{V}^{T})(\mathbf{A} - \mathbf{B}\mathbf{V}^{T})^{T} &= (\mathbf{A} - \mathbf{B}\mathbf{V}^{T})(\mathbf{A}^{T} - \mathbf{V}\mathbf{B}^{T}) \\&= \mathbf{A}\mathbf{A}^{T} - \mathbf{A}\mathbf{V}\mathbf{B}^{T} - \mathbf{B}\mathbf{V}^{T}\mathbf{A}^{T} + \mathbf{B}\mathbf{V}^{T}\mathbf{V}\mathbf{B}^{T}\\&=
    \mathbf{A}\mathbf{A}^{T} - \mathbf{B}\mathbf{B}^{T} - \mathbf{B}\mathbf{B}^{T} + \mathbf{B}\mathbf{B}^{T} \\
    &= \mathbf{A}\mathbf{A}^{T} - \mathbf{B}\mathbf{B}^{T}.
    \end{split}
\end{equation}
Since the trace is a linear operator, taking the trace of the above proves (\ref{eq:errorindicator}). 
\end{proof}

\begin{theorem}
\label{thm:errorerror}
After the $i$th iteration of the loop (\ref{eq:b1}) - (\ref{eq:b2}), the error, $E= \|\mathbf{A}^{(i)}\|_{F}^{2}$, is given by
   \begin{equation}
    E = \|\mathbf{A}\|_{F}^{2}  - \sum_{j = 1}^{i}\|\mathbf{A}\mathbf{V}_{j}\|_{F}^{2}.
    \label{eq:errorerror}
\end{equation}
\end{theorem}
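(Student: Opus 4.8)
The plan is to reduce the claim to the single-step error indicator of Theorem~\ref{thm:errorindicator} and then telescope, using Theorem~\ref{thm:22} to replace each running residual $\mathbf{A}^{(j-1)}$ by the original matrix $\mathbf{A}$. First I would apply Theorem~\ref{thm:errorindicator} to one step of the recursion (\ref{eq:b2}). Since $\mathbf{V}$ has orthonormal columns, each block $\mathbf{V}_i$ satisfies $\mathbf{V}_i^{T}\mathbf{V}_i = \mathbf{I}_b$, so taking $\mathbf{B} = \mathbf{A}^{(i-1)}\mathbf{V}_i$ and treating $\mathbf{A}^{(i-1)}$ as the input matrix gives
\begin{equation}
    \|\mathbf{A}^{(i)}\|_{F}^{2} = \|\mathbf{A}^{(i-1)} - \mathbf{A}^{(i-1)}\mathbf{V}_i\mathbf{V}_i^{T}\|_{F}^{2} = \|\mathbf{A}^{(i-1)}\|_{F}^{2} - \|\mathbf{A}^{(i-1)}\mathbf{V}_i\|_{F}^{2}.
\end{equation}

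Next I would unroll this recurrence starting from $\mathbf{A}^{(0)} = \mathbf{A}$, which telescopes to
\begin{equation}
    \|\mathbf{A}^{(i)}\|_{F}^{2} = \|\mathbf{A}\|_{F}^{2} - \sum_{j=1}^{i}\|\mathbf{A}^{(j-1)}\mathbf{V}_j\|_{F}^{2}.
\end{equation}
This is almost the stated formula (\ref{eq:errorerror}), except that each summand involves the running residual $\mathbf{A}^{(j-1)}$ rather than $\mathbf{A}$ itself.

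The crux is therefore to show $\mathbf{A}^{(j-1)}\mathbf{V}_j = \mathbf{A}\mathbf{V}_j$ for every $j$. By part~\ref{thm:bb} of Theorem~\ref{thm:22}, $\mathbf{A}^{(j-1)} = \mathbf{A}(\mathbf{I} - \mathbf{P}_{j-1})$ with $\mathbf{P}_{j-1} = \sum_{k=1}^{j-1}\mathbf{V}_k\mathbf{V}_k^{T}$, so
\begin{equation}
    \mathbf{A}^{(j-1)}\mathbf{V}_j = \mathbf{A}\mathbf{V}_j - \mathbf{A}\sum_{k=1}^{j-1}\mathbf{V}_k(\mathbf{V}_k^{T}\mathbf{V}_j).
\end{equation}
Because $\mathbf{V}$ has orthonormal columns, distinct blocks are mutually orthogonal, i.e.\ $\mathbf{V}_k^{T}\mathbf{V}_j = 0$ for $k \neq j$, so the entire sum vanishes and $\mathbf{A}^{(j-1)}\mathbf{V}_j = \mathbf{A}\mathbf{V}_j$. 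Substituting this into the telescoped identity yields (\ref{eq:errorerror}).

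The argument is essentially routine once the two earlier theorems are in hand; the only point demanding care is the replacement of $\mathbf{A}^{(j-1)}$ by $\mathbf{A}$ inside the norms, and this is exactly what makes the indicator inexpensive, since one never has to form or update the residual $\mathbf{A}^{(j-1)}$, only the blocks $\mathbf{A}\mathbf{V}_j$ of the original matrix. I would also double-check the hypothesis bookkeeping: Theorem~\ref{thm:errorindicator} is invoked with $\mathbf{A}^{(i-1)}$ in place of its generic matrix, which is legitimate because $\mathbf{A}^{(i-1)}$ is still $m\times n$ and $\mathbf{V}_i$ still has orthonormal columns.
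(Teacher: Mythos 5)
Your proposal is correct and follows essentially the same route as the paper: apply the error indicator of Theorem~\ref{thm:errorindicator} blockwise, telescope the recursion down to $\mathbf{A}^{(0)}=\mathbf{A}$, and then use the mutual orthogonality of the blocks $\mathbf{V}_k$ to replace $\mathbf{A}^{(j-1)}\mathbf{V}_j$ by $\mathbf{A}\mathbf{V}_j$ in each summand. The only cosmetic difference is that you justify that replacement by citing the closed form $\mathbf{A}^{(j-1)}=\mathbf{A}(\mathbf{I}-\mathbf{P}_{j-1})$ from Theorem~\ref{thm:22}\ref{thm:bb}, whereas the paper unrolls the recursion (\ref{eq:b2}) one step at a time to reach the same conclusion.
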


\begin{proof}
The proof is as follows.
\begin{equation}
\begin{split}
     E &= \|\mathbf{A}^{(i)}\|_{F}^{2} =  \|\mathbf{A}^{(i-1)} -  \mathbf{A}^{(i-1)}\mathbf{V}_{i}\mathbf{V}^{T}_{i}\|_{F}^{2} \\&=
    \|\mathbf{A}^{(i-1)}\|_{F}^{2} - \|\mathbf{A}^{(i-1)}\mathbf{V}_{i}\|_{F}^{2} \\&=
    \|\mathbf{A}^{(i-2)} -  \mathbf{A}^{(i-2)}\mathbf{V}_{i-1}\mathbf{V}^{T}_{i-1}\|_{F}^{2} - \|\mathbf{A}^{(i-1)}\mathbf{V}_{i}\|_{F}^{2} \\&=
    \|\mathbf{A}^{(i-2)}\|_{F}^{2} - \|\mathbf{A}^{(i-2)}\mathbf{V}_{i-1}\|_{F}^{2} - \|\mathbf{A}^{(i-1)}\mathbf{V}_{i}\|_{F}^{2} 
    \\& = \cdots 
    \\&= \|\mathbf{A}\|_{F}^{2}  - \sum_{j = 1}^{i}\|\mathbf{A}^{(j-1)}\mathbf{V}_{j}\|_{F}^{2}.
\end{split}
\label{eq:err1}
\end{equation}

In (\ref{eq:err1}), we need to prove that $\|\mathbf{A}^{(j-1)}\mathbf{V}_{j}\|_{F}^{2}$ = $\|\mathbf{A}\mathbf{V}_{j}\|_{F}^{2}$ when $j \leq i$. We have
\begin{equation}
    \begin{split}
        \|\mathbf{A}^{(j-1)}\mathbf{V}_{j}\|_{F}^{2}  &= \|(\mathbf{A}^{(j-2)} - \mathbf{A}^{(j-2)}\mathbf{V}_{j-1}\mathbf{V}_{j-1}^{T}) \mathbf{V}_{j}\|_{F}^{2} \\&=
         \|\mathbf{A}^{(j-2)}\mathbf{V}_{j} - \mathbf{A}^{(j-2)}\mathbf{V}_{j-1}\mathbf{V}_{j-1}^{T} \mathbf{V}_{j}\|_{F}^{2} 
    \end{split}
    \label{eq:ajvi}
\end{equation}
In equation (\ref{eq:ajvi}), $ \mathbf{A}^{(j-2)}\mathbf{V}_{j-1}\mathbf{V}_{j-1}^{T} \mathbf{V}_{j}  = \mathbf{0}$  since $\mathbf{V}$ has orthonormal columns, and $\mathbf{V}_{j-1}^{T} \mathbf{V}_{j} = \mathbf{0}$. Then

\begin{equation}
        \|\mathbf{A}^{(j-1)}\mathbf{V}_{j}\|_{F}^{2}  
        =
         \|\mathbf{A}^{(j-2)}\mathbf{V}_{j}\|_{F}^{2} = \cdots = \|\mathbf{A}\mathbf{V}_{j}\|_{F}^{2}.
\end{equation}

\end{proof}

Theorem \ref{thm:errorerror}  is an error indicator without the need to update the remainder of $\mathbf{A}$. The efficient blocked adaptive rank determination algorithm  without updating is shown in Algorithm \ref{alg:adaptivewithoutupdating}.
\begin{algorithm}[htb] 
\begin{algorithmic}[1]
\REQUIRE Given $\mathbf{A} \in \mathbb{R}^{m\times n}$, desired accuracy tolerance $\epsilon$,  sufficient large $l < min(m,n)$, block size $b$ and $v \geq 2$.
\ENSURE rank $k$, $\mathbf{V}(:,1:k)$and $\mathbf{G}(:,1:k)$.
\STATE $\mathbf{L} = []$, $\mathbf{U} = []$, $\mathbf{A}^{(0)} = \mathbf{A}$, $E =\|A\|_{F}^{2}$ and $acc =  \epsilon^{2}E$;
\STATE Passing matrix $\mathbf{A}$, $l$, and $v$ into Algorithm \ref{alg:gpowerlu} to get the orthonormal basis $\mathbf{V}$; \label{adaptive:step2}
\STATE $\mathbf{G} = \mathbf{A}\mathbf{V}$;  \COMMENT{$\mathcal{C}_{mm}mnl$} 
\FOR{$i = 1,2, 3, \cdots $}
   \STATE Let $t_{1} = (i-1)b+1$ and $t_{2} = ib$;
    \STATE $E = E - \|\mathbf{G}(:, t_{1}: t_{2})\|_{F}^{2}$; \label{adaptive:step7}
    \IF{$E\leq acc$} \label{adaptive:step8}
        \STATE STOP;
    \ENDIF \label{adaptive:step10}
\ENDFOR
 \end{algorithmic}
\caption{Effective Blocked Adaptive Rank Determination Algorithm}
\label{alg:adaptivewithoutupdating}
\end{algorithm}
\begin{remark}
Algorithm \ref{alg:adaptivewithoutupdating} is very efficient with the most costly step being \ref{adaptive:step2}, which is actually Algorithm \ref{alg:gpowerlu}.
If $\mathbf{V}\in \R^{n \times l}$ obtained from step \ref{adaptive:step2} has failed to achieve the given tolerance, then we need to regenerate the random Gaussian matrix and rerun the algorithm to obtain additional rank information from the remainder matrix, $\mathbf{A}-\mathbf{A}\mathbf{V}\mathbf{V}^{T}$. Although extra computation might be needed, this rank determination method is still efficient, since it requires fewer passes of $\mathbf{A}$ or it's remainder compared with the blocked random QB algorithm \cite{martinsson2016randomized}. 
\end{remark}

\subsection{PowerLU\_FP: A Method for Fixed Precision Low-Rank Matrix Approximation} So far we discussed a new blocked adaptive rank determination  Algorithm \ref{alg:adaptivewithoutupdating}. It produces a randomized LU algorithm suitable for the fixed precision low-rank approximation problem. We call it PowerLU\_FP; FP for fixed precision.  PowerLU\_FP is shown in Algorithm \ref{alg:powerlufp}. 

\begin{algorithm}[htb] 
\begin{algorithmic}[1]
\REQUIRE Given $\mathbf{A} \in \mathbb{R}^{m\times n}$, desired accuracy tolerance $\epsilon$,  sufficiently large $l < min(m,n)$, block size $b$ and $v \geq 2$.
\ENSURE rank $k$, $\mathbf{P}, \mathbf{Q}, \mathbf{L}, \mathbf{U}$ such that $\mathbf{PAQ} \approx \mathbf{LU}$, where $\mathbf{P}, \mathbf{Q}$ are orthogonal permutation matrices, $\mathbf{L} \in \mathbb{R}^{m \times k}$ and $\mathbf{U} \in \mathbb{R}^{k\times n}$ are the lower and upper triangular matrix, respectively..
\STATE Use Algorithm \ref{alg:adaptivewithoutupdating} to take $\mathbf{A}$, $\epsilon$, $b$, $l$, and $v$ to obtain $k$, $\mathbf{V}$, and $\mathbf{G}$; \label{powerlufp:step1}
\STATE  $[\mathbf{L}_{1}, \mathbf{U}_{1}, \mathbf{P}] = lu(\mathbf{G})$;\label{powerlufp:step2}\COMMENT{$\mathcal{C}_{lu} mk^{2}$}
\STATE  $\mathbf{B} = \mathbf{U}_{1}\mathbf{V}^{T} $; \COMMENT{$\mathcal{C}_{mm} nk^{2}$}
\STATE $[\mathbf{L}_{2}, \mathbf{U}_{2}, \mathbf{Q}] = lu(\mathbf{B}^{T})$; \COMMENT{$\mathcal{C}_{lu} nk^{2}$}
\STATE $\mathbf{L} = \mathbf{L}_{1}\mathbf{U}_{2}^{T}$; \COMMENT{$\mathcal{C}_{mm} mk^{2}$}
\STATE $\mathbf{U} = \mathbf{L}_{2}^{T}$; \label{powerlufp:step6}
\caption{PowerLU\_FP: PowerLU for the Fixed Precision Problem}
\label{alg:powerlufp}
\end{algorithmic}
\end{algorithm}

\begin{remark}
The time complexity of PowerLU\_FP is related to $l$. If $l \approx k$, then the time complexity of PowerLU\_FP is almost the same as PowerLU. Thus we can make PowerLU\_FP more efficient with a suitable $l$. 

\end{remark}

\section{Single-Pass Randomized LU for the Low-Rank Matrix Approximation} \label{sec:sec5} 
In this section, we will discuss the single-pass algorithm.  In \cite{halko2011finding}, the authors present a single-pass randomized QB algorithm, which will compress the input matrix $\mathbf{A}$ by using two randomized Gaussian matrices.  This algorithm needs to solve a system of linear equations, but is less accurate in practice \cite{halko2011finding, yu2017single}.  A more general single-pass algorithm can be found in \cite{tropp2017practical}. In \cite{yu2017single, yu2018efficient}, the authors proposed a single-pass scheme for principal component analysis, which is quite accurate when the matrix is stored in column-major or row-major format. Inspired by this last single-pass algorithms, we propose the single-pass randomized LU for the low-rank matrix approximation. Our proposed single-pass algorithm requires that the matrix is stored in column-major format. For a matrix in row-major format, a slight modification allows the algorithm to work.  The proposed algorithm is shown in the Algorithm \ref{alg:singlepasslu}.  

Given  $\mathbf{A} \in \mathbb{R}^{m\times n}$, suppose we want to find it's low-rank approximation with target rank $k$.  First we generate a random Gaussian matrix $\mathbf{\Omega} \in \mathbb{R}^{n \times k}$; then we compute 
\begin{equation}
    \mathbf{G} = \mathbf{A^{T}\Omega},
    \label{eq:G}
\end{equation}
and
\begin{equation}
    \mathbf{H} =\mathbf{AG}.
    \label{eq:H}
\end{equation}
Then we perform partial pivoting LU on $\mathbf{H}$ to define $\mathbf{L}_{1}$ and $\mathbf{U}_{1}$ with the permutation matrix, $\mathbf{P}$, encoding the pivots
    \begin{equation}
            \mathbf{P}\mathbf{H} = \mathbf{P}\mathbf{AG} = \mathbf{L}_{1}\mathbf{U}_{1}.
    \end{equation}
Taking the transpose gives us
\begin{equation}
    \mathbf{G}^{T}\mathbf{A}^{T}\mathbf{P} = \mathbf{U}_{1}^{T}\mathbf{L}_{1}^{T}. 
\end{equation}
Now multiply both sides by $\mathbf{G}^{T\dagger}$
\begin{equation}
    \mathbf{A}^{T}\mathbf{P} = \mathbf{G}^{T\dagger}\mathbf{U}_{1}^{T}\mathbf{L}_{1}^{T}.
\end{equation}
We now perform partial pivoting LU on $\mathbf{G}^{T\dagger}\mathbf{U}_{1}^{T}$ to obtain $\mathbf{L}_{2}$ and $\mathbf{U}_{2}$ with the permutation matrix, $\mathbf{Q}$, encoding the pivots
\begin{equation}
    \mathbf{Q}\mathbf{G}^{\dagger T}\mathbf{U}_{1}^{T} = \mathbf{L}_{2}\mathbf{U}_{2}.
\end{equation}
We have now computed the desired lower and upper triangular matrices as
\begin{equation}
    \mathbf{L} = \mathbf{L}_{1}\mathbf{U}_{2}^{T},
\end{equation}
and
\begin{equation}
    \mathbf{U} = \mathbf{L}_{2}^{T}.
\end{equation}

\begin{remark}
In practice, if a matrix is stored in column-major format, we compute step (\ref{eq:G}) and (\ref{eq:H}) with one pass over the matrix $\mathbf{A}$ as follows \cite{yu2018efficient}:

Let $\mathbf{A}(:, i)$ be the $i$th column of $\mathbf{A}$, then the $i$th row of $\mathbf{G}(i,:)$ is computed by
\begin{equation}
    \mathbf{G}(i,:) = \mathbf{A}(:,i)^{T} \mathbf{\Omega}.
    \label{eq:gi}
\end{equation}  
By using $\mathbf{A}(:, i)$ and $\mathbf{G}(i,:)$, we compute $\mathbf{H}_{i}$ as an outer product
\begin{equation}
    \mathbf{H}_{i} = \mathbf{A}(:,i)\mathbf{G}(i,:).  
    \label{eq:hi}
\end{equation}
We only to access the $i$th column of $\mathbf{A}$ one time to compute (\ref{eq:gi}) and (\ref{eq:hi}).
Finally we obtain $\mathbf{G}$ and assemble $\mathbf{H} = \sum_{i}\mathbf{H}_{i}$ by accessing  $\mathbf{A}$ once per successive column access. 

\end{remark}
\begin{algorithm}[htb] 
\begin{algorithmic}[1]
\REQUIRE Given $\mathbf{A} \in \mathbb{R}^{m\times n}$, desired rank  $k$. 
\ENSURE Matrix: $\mathbf{P}, \mathbf{Q}, \mathbf{L}, \mathbf{U}$ such that $\mathbf{PAQ} \approx \mathbf{LU}$, where $\mathbf{P}, \mathbf{Q}$ are orthogonal permutation matrices, $\mathbf{L} \in \mathbb{R}^{m \times k}$ and $\mathbf{U} \in \mathbb{R}^{k\times n}$ are lower and upper triangular matrices.
\STATE Generate a randomized Gaussian matrix $\mathbf{\Omega}$ with size $m \times k$; \label{singlepass:step1}
\STATE Compute $\mathbf{G} = \mathbf{A}^{T}\mathbf{\Omega}$; \label{singlepass:step2}
\STATE Compute $\mathbf{H} = \mathbf{A} \mathbf{G}$; \label{singlepass:step3}
\STATE  $[\mathbf{L}_{1}, \mathbf{U}_{1}, \mathbf{P}] = lu(\mathbf{H})$;
\STATE  $[\mathbf{L}_{2},\mathbf{U}_{2},\mathbf{Q}] = lu(\mathbf{G}^{\dagger T}\mathbf{U}_{1}^{T})$;
\STATE $\mathbf{L} = \mathbf{L}_{1}\mathbf{U}_{2}^{T}$; 
\STATE $\mathbf{U} = \mathbf{L}^{T}$;
\end{algorithmic}
\caption{Single-Pass Randomized LU Factorization}
\label{alg:singlepasslu}
\end{algorithm}
In Algorithm \ref{alg:singlepasslu}, we do not use the oversampling parameter for the random Gaussian matrix. If oversampling is used, we need to do the truncation to get the desired results for expected rank $k$.  Experiments show that our proposed algorithm can achieve results almost as accurate as the method in \cite{yu2017single}, and produce better results than the single-pass algorithm in \cite{halko2011finding}. 

\section{Numerical Results}\label{sec:sec6}
In this section, we will present some numerical experiments to show the efficiency and accuracy of our proposed methods.  The experiments were carried out on a machine with
an Intel(R) Xeon(R) E5-2603 v4 @ 1.70GHz 6 core CPU  with 16 GB of RAM.  Our code was implemented in MATLAB.  Since these algorithms involve randomness, all the results shown in the this section are the average of 20 random calculations. The numerical test settings are similar to those in \cite{yu2018efficient}. The algorithms used in the this section are the following: 
\begin{itemize}
    \item PowerLU: The generalized PowerLU, where the power iteration is computed by the Algorithm \ref{alg:gpowerlu}. 
    \item PowerLU\_FP: Algorithm \ref{alg:powerlufp}.
    \item SinglePass: Algorithm \ref{alg:singlepasslu}.
    \item RandSVD: RandSVD Algorithm \ref{alg:rsvd} , where the power iteration is computed by the Algorithm \ref{alg:basicpoweriterationsvd} .
    \item RandLU: RandLU Algorithm \ref{alg:rlu} , where the power iteration is computed by the Algorithm \ref{alg:basicpoweriteration}.
    \item RandLU\_Original: RandLU Algorithm \ref{alg:rlu}, where the power iteration is computed directly without reorthogonalization.  
    \item RangeFinder: Algorithm 4.2 in  \cite{halko2011finding}.
    \item RandQB\_b: Blocked random QB Algorithm in \cite{martinsson2016randomized}.
    \item RandQB\_FP: Algorithm 4 in \cite{yu2018efficient}. Note that here FP means Few Passes in the RandQB\_FP algorithm as opposed to our meaning of fixed precision.
    \item SinglePass2011: Single-Pass Algorithm in \cite{halko2011finding}.
\end{itemize}

In this paper, $v$ denotes the number of passes over the matrix $\mathbf{A}$ for the algorithms: PowerLU and PowerLU\_FP, and $p$ denotes the power iteration exponent for the algorithms: RandLU, RandSVD, RandQB\_b, and RandQB\_FP. However, we have the following relation between $v$ and $p$:
\begin{equation}
v = 2p + 2.
\label{eq:rela}
\end{equation}
Therefore,  for simplicity, we only use $p$.  The number of passes over the matrix will be computed by equation (\ref{eq:rela}) for the PowerLU and PowerLU\_FP algorithms.

\subsection{Comparison of Execution Time} In this section, we will examine the execution time of our proposed algorithms.  To test our algorithms, we generate random square matrices with variety of sizes ranging from 2000 to 32000. 

In the first experiment, we set $l  = 200$ for all matrices regardless of their size.  All the experiments are done both with and without power iteration. The results are shown in the Fig.~\ref{fig:1}. For RandQB\_b and RandLU results are not available when $n=32000$ due to machine memory constraints.   Fig.~\ref{fig:1} on the left is the result without the power iteration.  We can see that PowerLU achieves up to 1.80X speedup over RandLU, and has almost the same performance as RandSVD.  The results with the power iteration, where $p = 1$, are shown on the right. The gap between RandLU and RandSVD is smaller since more time is spent by both on matrix-matrix multiplication. PowerLU gets a 1.44X speedup over RandLU. 
We notice that when $p = 0$, RandLU is actually the original randomized LU algorithm, without power iteration. However, our tests show that RandLU is slower than RandSVD, but our PowerLU algorithm provides results comparable with RandSVD.  

For algorithms used in the fixed precision problem, we set the block size $b = 20$.  PowerLU\_FP obtains up to a 4.83X speedup without power iteration, and up to a 3.41X speedup when with power iteration parameter $p = 1$ compared with RandQB\_b.  From Fig.~\ref{fig:1}, we see our proposed PowerLU\_FP has a 1.16X speedup over RandQB\_FP when $n = 2000$ without power iteration.  When power iteration is used,  PowerLU\_FP  obtains up to a 1.21X speedup when $n = 2000$ over RandQB\_FP. 
We can see that as the matrix size increases, the advantage over RandQB\_FP becomes smaller.  RandQB\_FP is currently the state-of-art algorithm for the fixed precision low-rank approximation using QR factorization, and it produces an orthonormal basis for $\mathbf{A}$ satisfying (\ref{eq:adaperror}). However, RandQB\_FP does not actually produce a factorization, and there are extra steps needed to produce one, such as the SVD.  Our PowerLU\_FP does produce a low rank LU factorization. 

\begin{figure}[ht]
\centering
\includegraphics[width=0.98\textwidth]{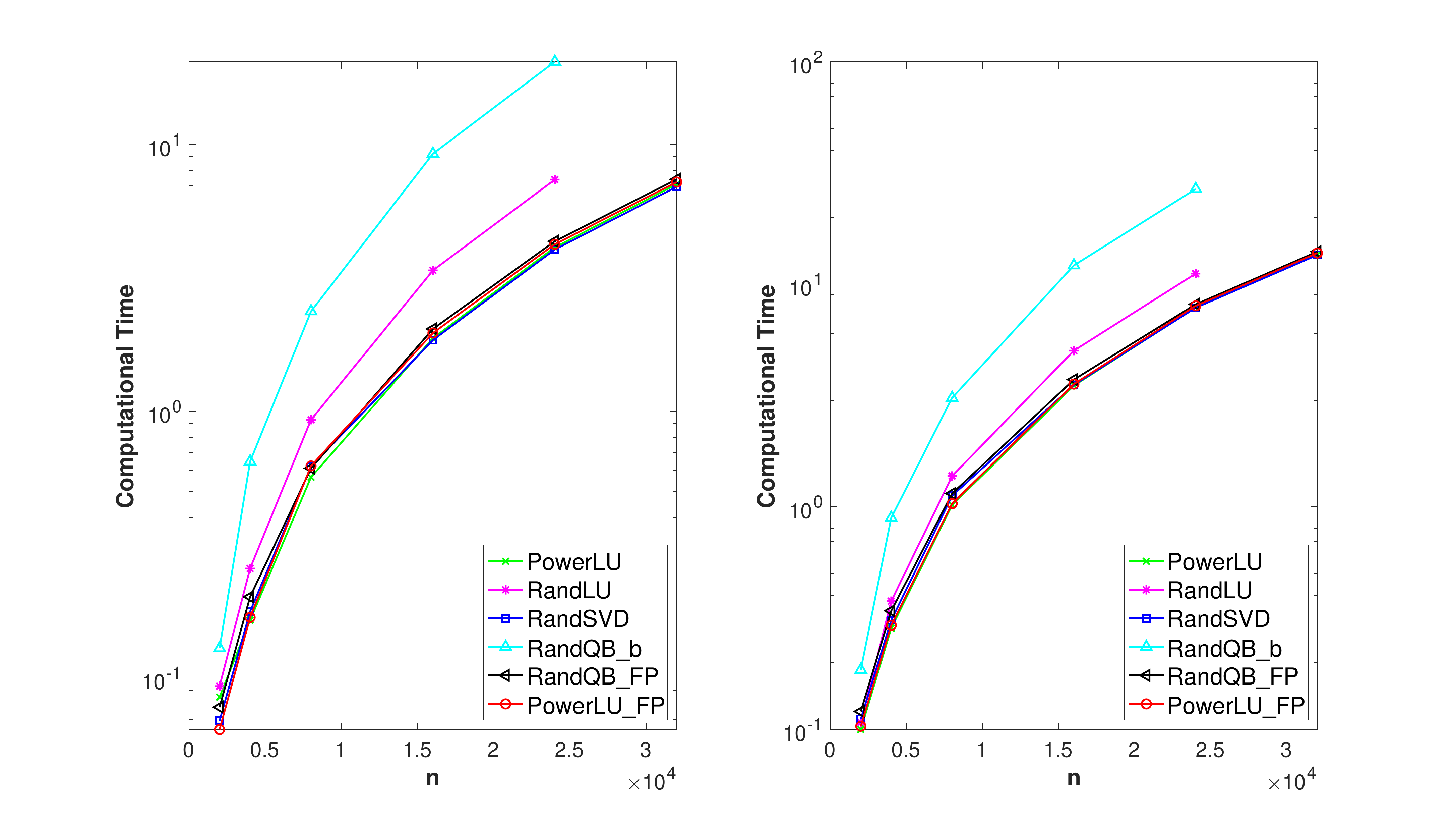}
\caption{Runtime of the algorithms with fixed $l = 200$ for dense matrices, without power iteration (left), and with power iteration (right).}
\label{fig:1}
\end{figure}

In the second experiment, we test the efficiency of our proposed algorithms on sparse matrices. We generate our test matrices using the MATLAB command $sprand$ with density $0.3\%$.  The rest of the  parameters are set to the values used above. 
The results are shown in Fig.~\ref{fig:1_2}.  Again, results for RandLU are not available when $n=32000$.  As before, the left and right sides of the Fig.~\ref{fig:1_2} are the results without and with power iteration respectively.  When $n < 20000$, RandLU achieves the best results. This is because a sparse matrix, matrix-matrix multiplication is faster than for a dense matrix, and because
PowerLU must execute an extra QR decomposition to obtain the orthonormal basis.
 However, PowerLU performs slightly better than RandSVD. When $n \geq 20000$, we see that RandLU, PowerLU and RandSVD have comparable running times.  PowerLU\_FP can has up to an 11.36X speedup over RandQB\_b, and  is generally faster than RandQB\_FP for sparse matrices. 

\begin{figure}[ht]
\centering
\includegraphics[width=0.98\textwidth]{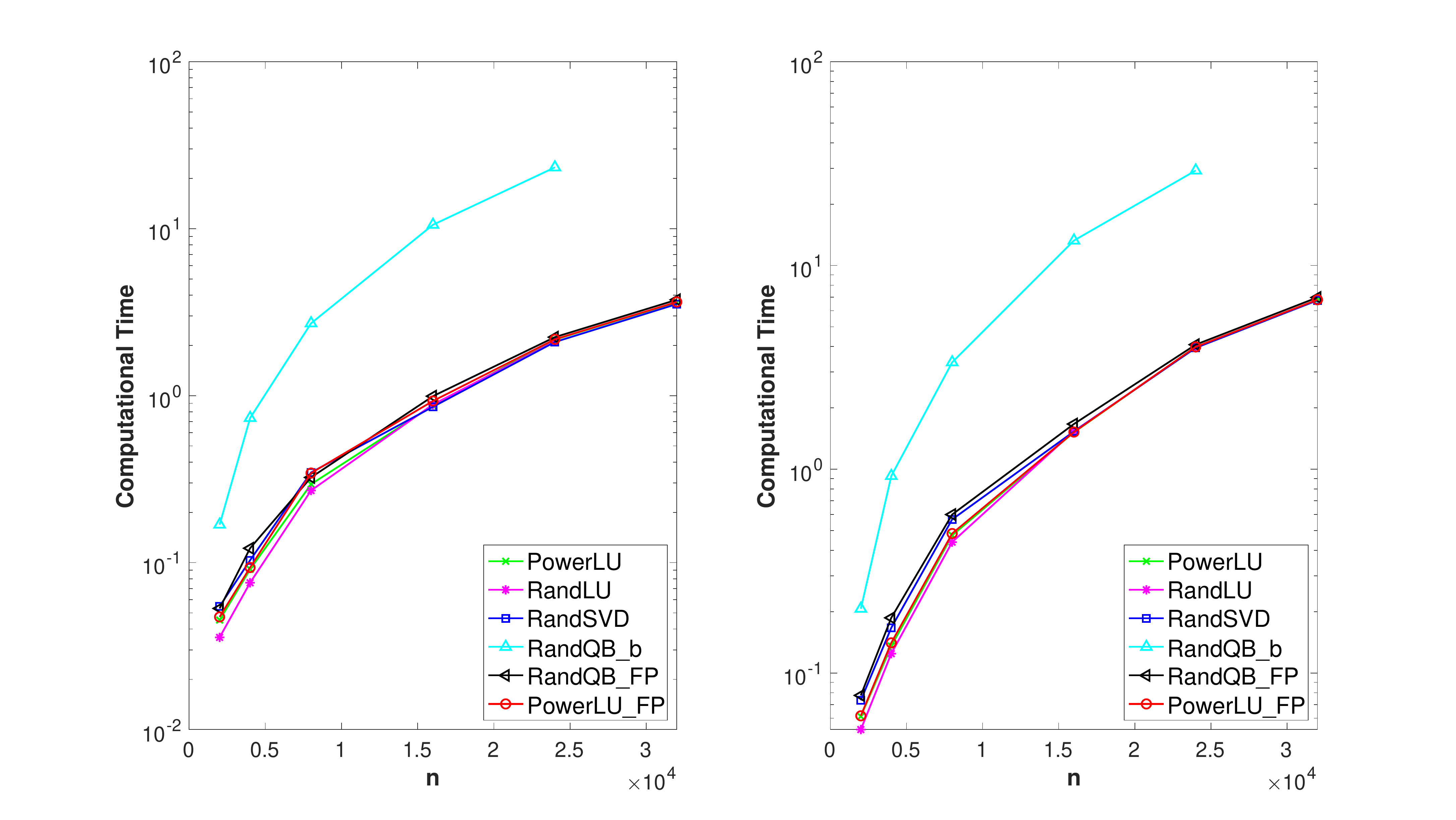}
\caption{Runtime of the algorithms with fixed $l = 200$ for sparse matrices, without power iteration (left), and with power iteration (right).}
\label{fig:1_2}
\end{figure}

In the third experiment, we fixed the size of the matrix and varied of the rank, $k$, from $100$ to $1000$. The results are shown in Fig.~\ref{fig:1_3}.  PowerLU\_FP shows up to a 4.21X speedup over randQB\_b without  power iteration. However, the speedup decreases a little when we set $p = 1$ in the power iteration, since more time will be spent on matrix-matrix multiplication.  From Fig.~\ref{fig:1_3}, we can see that our PowerLU\_FP generally outperforms RandQB\_FP for different values of the rank, $k$, especially when $k$ is large. In terms of the fixed rank algorithms, PowerLU always outperforms RandLU and RandSVD. However, RandLU is actually slower than RandSVD with a relatively small value of $k$ when compared with RandSVD. 

\begin{figure}[ht]
\centering
\includegraphics[width=0.98\textwidth]{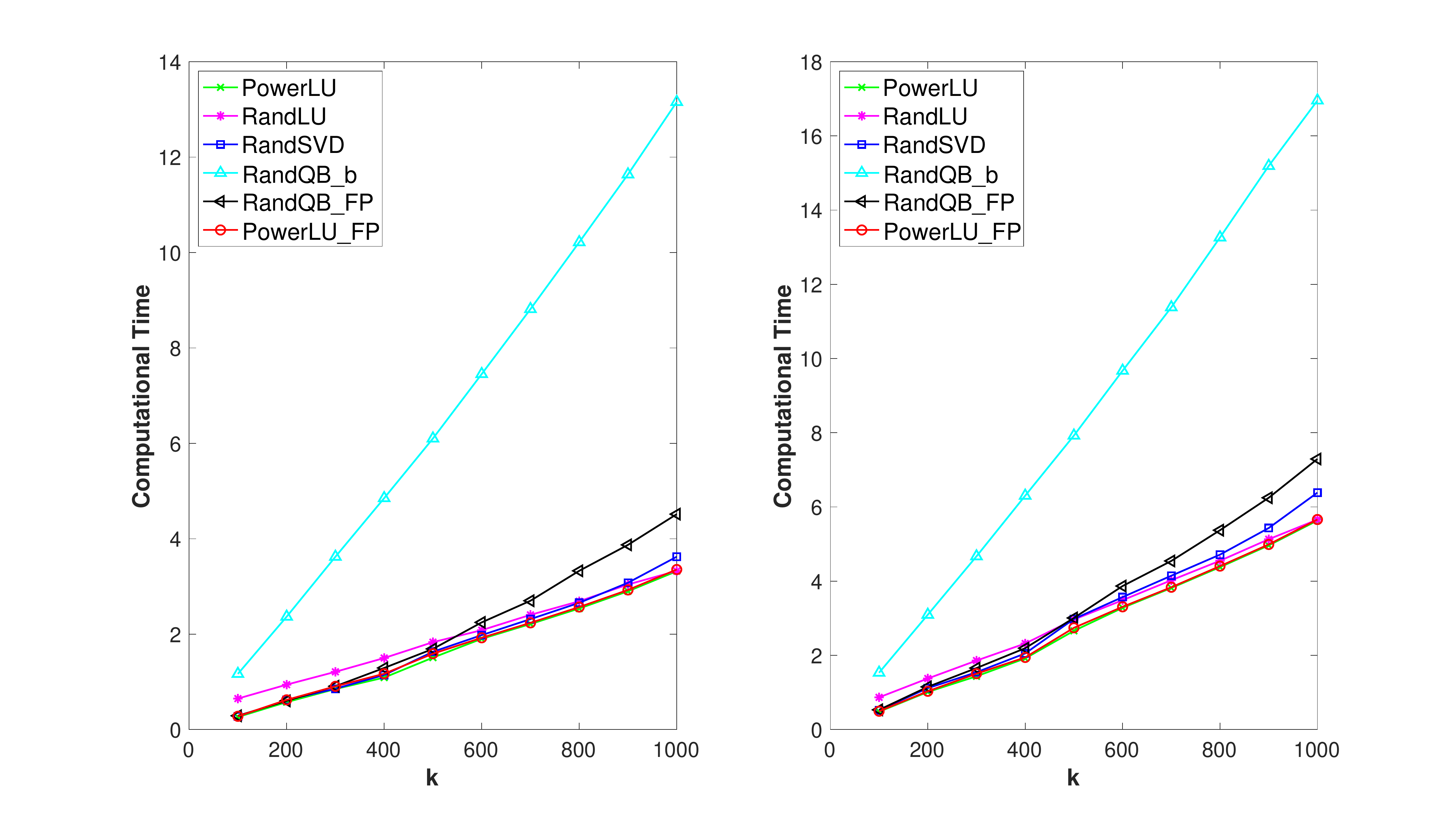}
\caption{Runtime of the algorithms with fixed matrix size $n = 8000$, without power iteration (left), and with power iteration (right).}
\label{fig:1_3}
\end{figure}
 
\subsection{Comparison of Accuracy}
\label{sec:n2}
To test the accuracy of our methods, we randomly generate three types of the matrices based on the singular value decay speed as follows \cite{gopal2018powerurv, yu2018efficient}:
\begin{itemize}
    \item \textit{Matrix Type 1:(Slow decay)}: This is a matrix $\mathbf{A = U\Sigma V}^{T} $, where $\mathbf{U}$ and $\mathbf{V}$ are
random matrices with  orthonormal columns, and $\mathbf{\Sigma}$ is a rectangular, diagonal matrix with entries $\mathbf{\Sigma}(k, k) = \frac{1}{k^{2}}$.
 \item \textit{Matrix Type 2:(Fast decay)}: $\mathbf{\Sigma}(k, k) = e^{-k/7}$.
 \item \textit{Matrix Type 3:(S-Shaped decay)}: $\mathbf{\Sigma}(k, k) = 0.0001 + (1 + e^{k-30})^{-1}$.

\end{itemize}

Suppose $\mathbf{A}$ is the original matrix and $\mathbf{A}_{k}$ is the approximated matrix obtained by using the randomized algorithm.  Then we can compute the relative Frobenius error as following:
\begin{equation}
    \label{eq:error}
   \frac{\|\mathbf{A} - \mathbf{A}_{k}\|_{F}}{\|\mathbf{A}\|_{F}}.
\end{equation} 

For each matrix type, we generate a square matrix of size 2000, for which we compare the errors of our proposed algorithms with truncated SVD, RandSVD, and RandLU for different values of $l$.  We will also set different power iteration parameter values, $p$. For PowerLU and PowerLU\_FP, we need to use equation (\ref{eq:rela}) to compute the actually number of matrix passes.
As shown in the Figs.~\ref{fig:2}, \ref{fig:3} and \ref{fig:4}, our proposed PowerLU and PowerLU\_FP have accuracy comparable to RandSVD with power iteration.  For our proposed algorithms, when $p = 1$, they achieves almost the same accuracy as when $p =2$. RandLU\_Original, without reorthogonalization, loses some accuracy as $l$ grows.  For our proposed  algorithm, any number of passes $v \geq 2$ of $\mathbf{A}$ is possible as opposed an even number of passes.  For example, If PowerLU uses 3 passes of $\mathbf{A}$ to achieve the required accuracy, RandSVD and RandLU would need 4 passes since these two algorithms can only accept even number of passes.  
\begin{figure}[ht]
\centering
\includegraphics[width=0.98\textwidth]{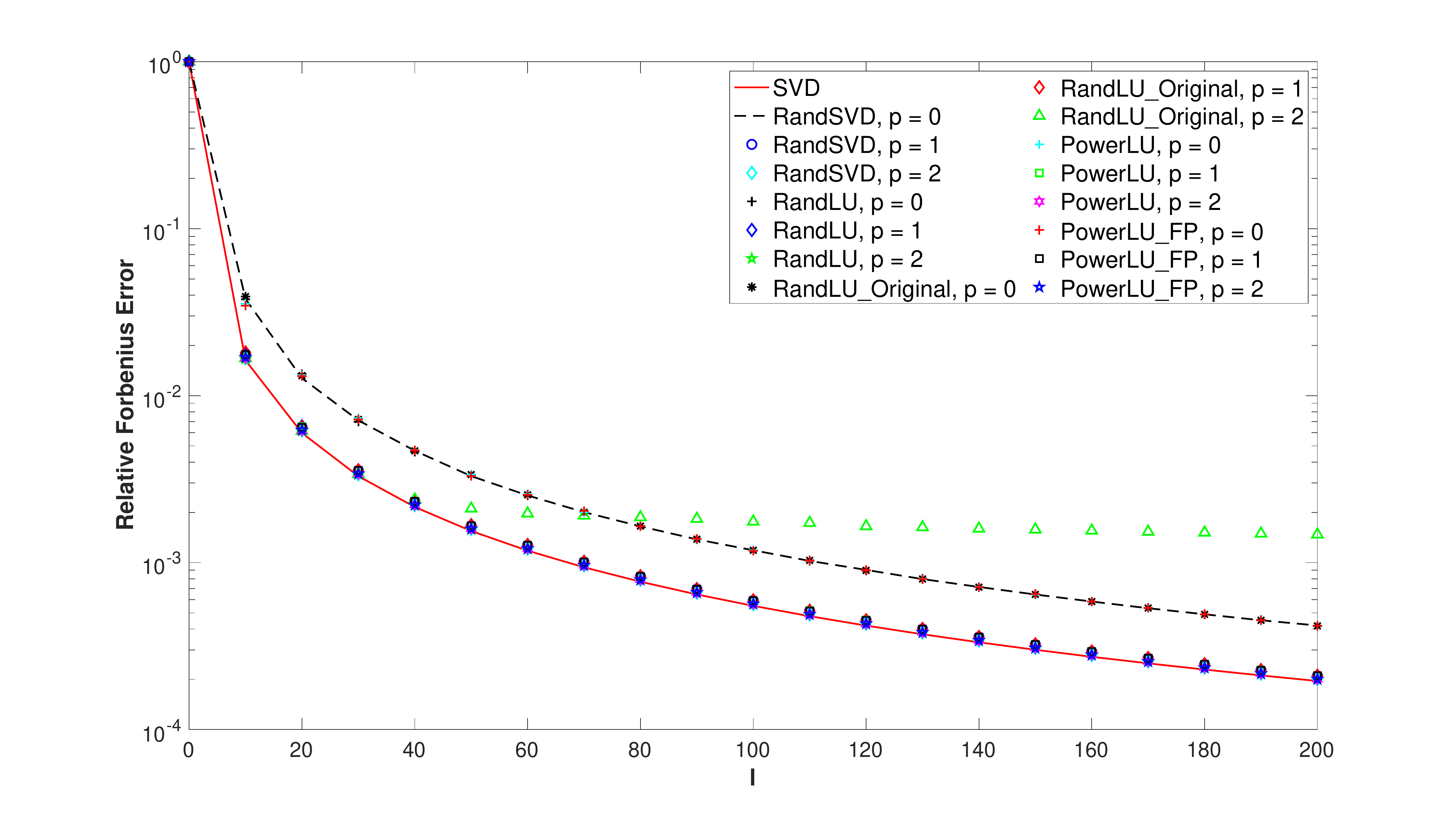}
\caption{Errors for Matrix Type 1 (Slow decay)}
\label{fig:2}
\end{figure}
\begin{figure}[ht]
\centering
\includegraphics[width=0.98\textwidth]{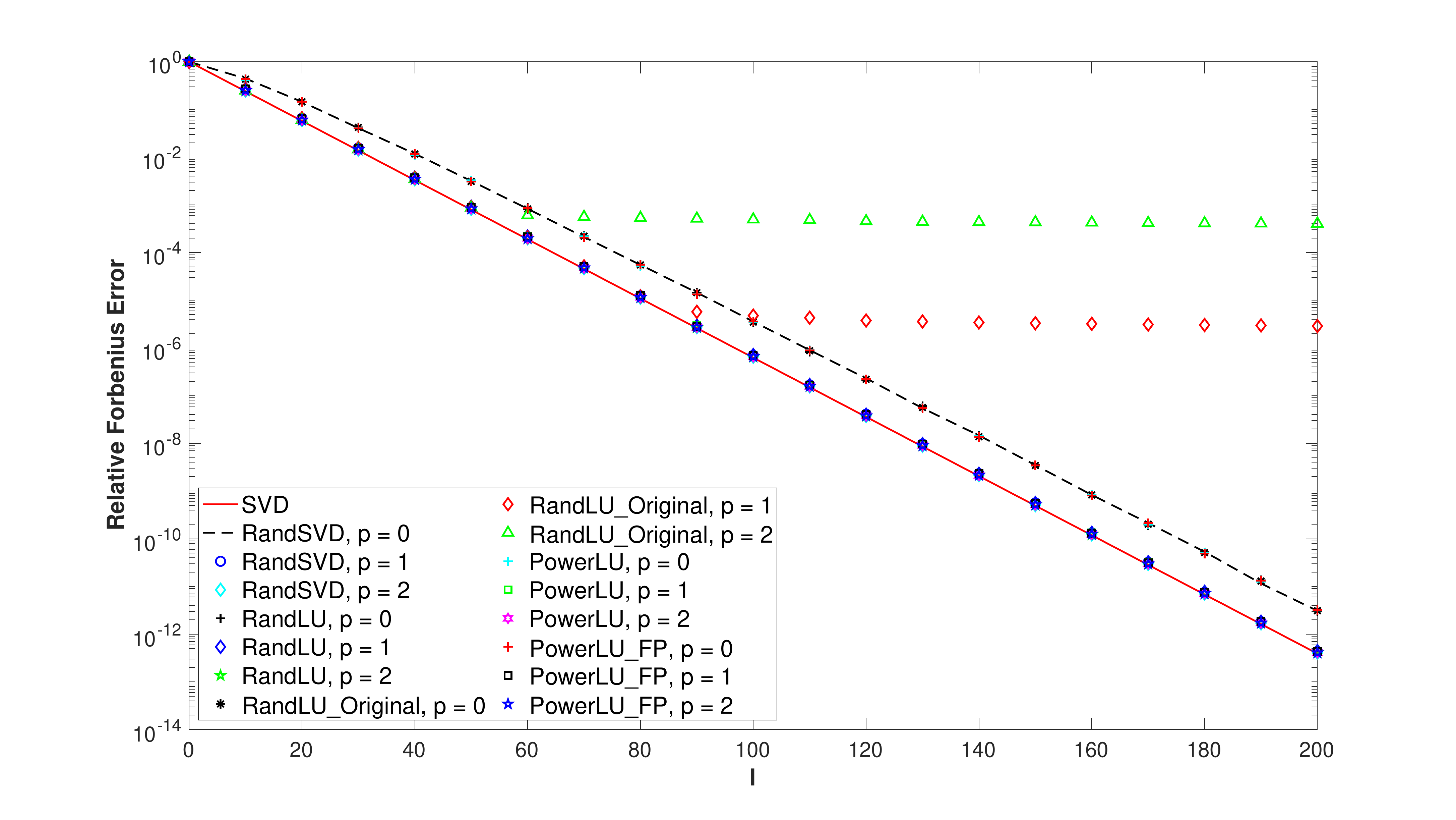}
\caption{Errors for Matrix Type 2 (Fast decay)}
\label{fig:3}
\end{figure}
\begin{figure}[ht]
\centering
\includegraphics[width=0.98\textwidth]{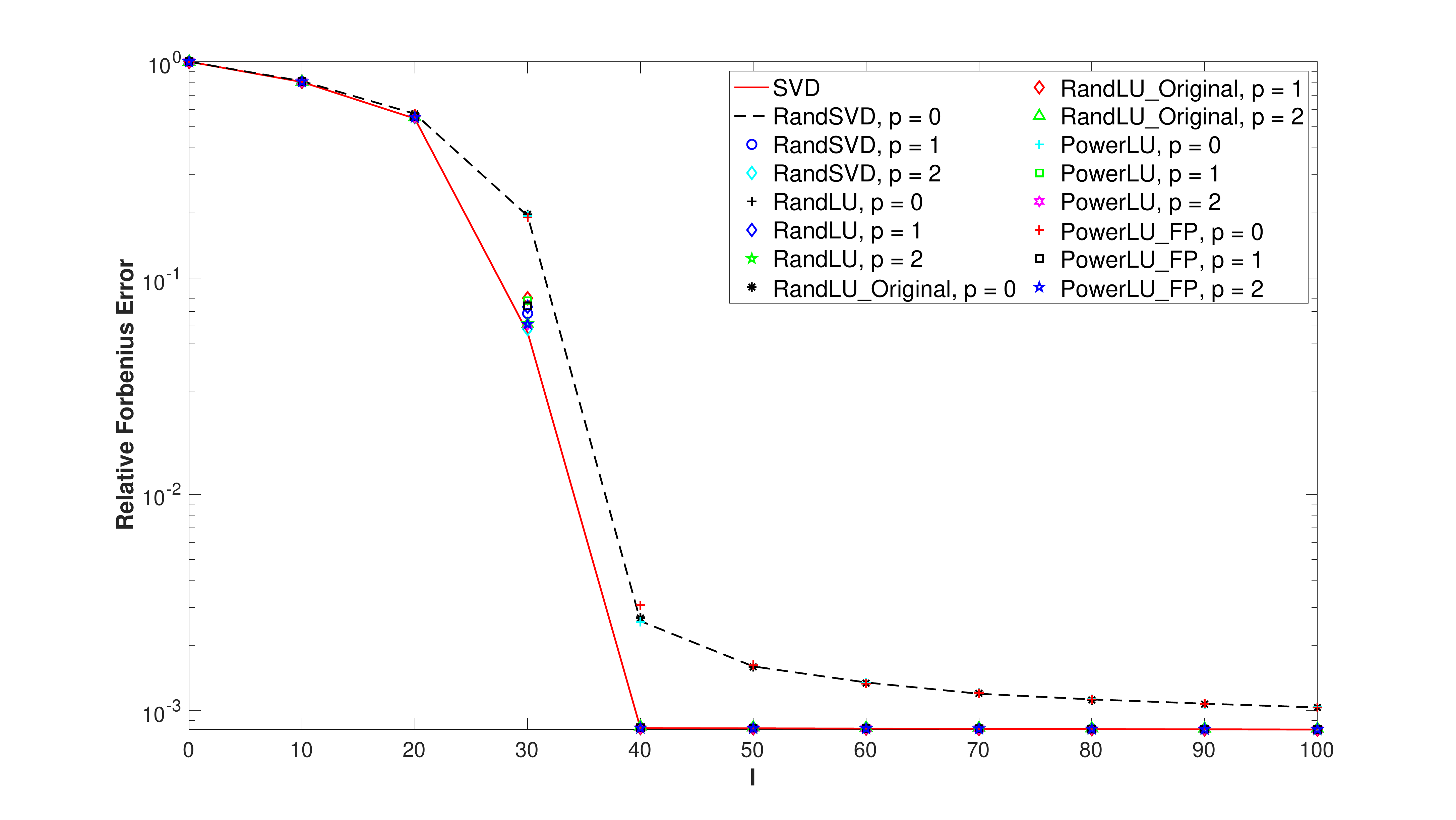}
\caption{Errors for Matrix Type3 (S-shaped decay)}
\label{fig:4}
\end{figure}

\subsection{Performance of the Single-Pass Algorithm} In this section, we measure the accuracy of our proposed LU-based single-pass low-rank approximation Algorithm \ref{alg:singlepasslu}.  The results are shown in  Fig.~\ref{fig:7}.  We can see that our proposed algorithm has almost the same accuracy as RandQB\_FP \cite{yu2018efficient}, and both of them are better than SinglePass11\cite{halko2011finding}.  We only show results for Matrix Types 1 and 2.
\begin{figure}[ht]
\centering
\includegraphics[width=0.98\textwidth]{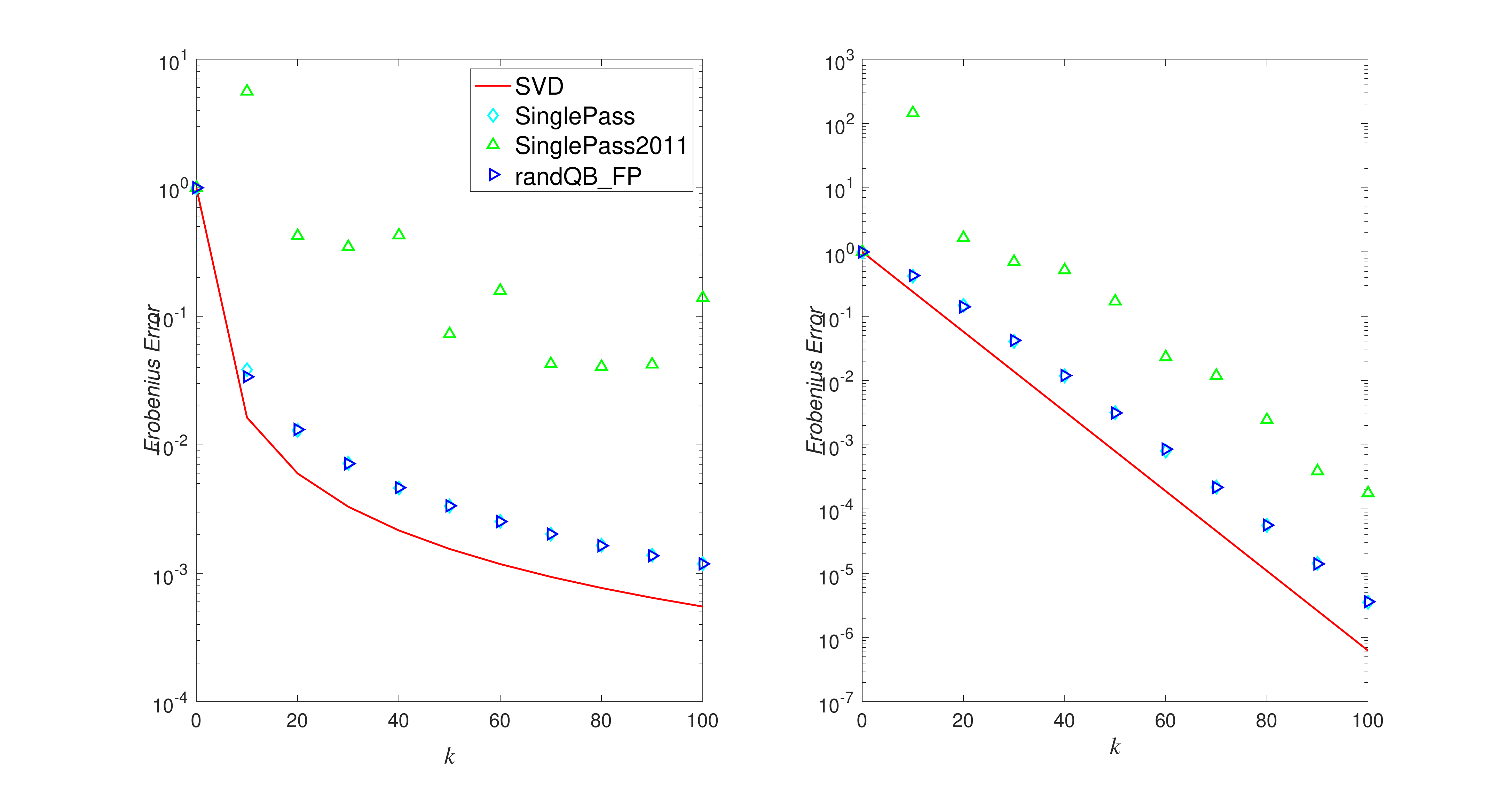}
\caption{Approximation errors of the single-pass algorithms and truncated SVD for Matrix Type 1 (left) and Matrix Type 2 (right).}
\label{fig:7}
\end{figure}

\subsection{Results of Fixed Precision Approximation}
In this section, we will present the results of our proposed algorithms used to solve the fixed precision low-rank matrix approximation problem.  The Eckart and Young Theorem \cite{eckart1936approximation} implies that the optimal solution can be achieved via truncated SVD.  For the optimal solution, we could compute the SVD and then calculate the error as $(\sum_{i = k +1}^{min(m,n)}\sigma_{i}^{2})^{1/2}$, where $k$ is the rank that satisfies the given tolerance. 
Our proposed method, PowerLU\_FP, will be compared with truncated SVD, RandQB\_FP and RangeFinder. For both PowerLU\_FP and RandQB\_FP, we set $l = 50b$ sufficiently large, where $b$ is the block size.
PowerLU\_FP can only generate a matrix of rank that is a multiple of the block size $b$.  To produce the rank more precisely, we can replace steps \ref{adaptive:step8}-\ref{adaptive:step10} in Algorithm \ref{alg:adaptivewithoutupdating} by the following steps \ref{peci:step1} - \ref{peci:step2} . 

\begin{algorithm}[htb] 
\begin{algorithmic}[1]
\setcounterref{ALC@line}{adaptive:step7}
\FOR{$j = 1, 2, \cdots, b$} \label{peci:step1}
\STATE $E = E - \|\mathbf{G}(:,t1+j)\|_{F}^{2}$;
\IF{$E < err$}
\STATE break; 
\ENDIF
\ENDFOR \label{peci:step2}
\end{algorithmic}
\label{alg:preciselyerror}
\end{algorithm}

First, we generate the three types of square matrices as in \S \ref{sec:n2}, with size 8000. For PowerLU\_FP and RandQB\_FP, we set $p = 1$ (corresponding to $v = 4$ for PowerLU\_FP), and we set $b = 10$ for both algorithms except for the last case where we set $b = 40$ which is needed to produce a sufficiently large $l$.  We test our algorithm with different$\epsilon$ values. The results are listed in Table \ref{table:1}.  We can see that our proposed PowerLU\_FP  produces a rank close to the truncated SVD but with a speedup greater than 10X. 

We next test our algorithm with a real data, which is the standard image \cite{yu2018efficient} shown in Fig.~\ref{fig:8}
represented by a $9504 \times 4752$ matrix.  We can see from Table \ref{table:2}, when we set $p = 2$ (corresponding $v = 6$ for PowerLU\_FP), the  rank is reduced significantly.  With the same  exponent, the results are almost the same with PowerLU\_FP despite a different block size.  Our proposed algorithm nearly achieves a 7X size reduction from the original, which is comparable to RandQB\_FP.  The compressed image obtained from PowerLU\_FP is shown in Fig.~\ref{fig:9}.  

\begin{table}[htb]
\centering
\caption{Results for fixed precision problems.}
\label{table:1}
\resizebox{\columnwidth}{!}{
\begin{tabular}{*{10}{c}}
\toprule
 Matrix &\multirow{2}*{$\mathbf{\epsilon}$} & \multicolumn{2}{c}{PowerLU\_FP}& \multicolumn{2}{c}{RandQB\_FP} & \multicolumn{2}{c}{Truncated SVD} & \multicolumn{2}{c}{Range Finder }\\
 \cmidrule{3-4} \cmidrule{5-6} \cmidrule{7-8} \cmidrule{9-10}
 Type & &Rank & Time(s) & Rank & Time(s) & Rank & Time(s) & Rank & Time(s)\\ \midrule

  \multirow{2}*{1}& 1e-2 & 15    & 2.511   & 16& 2.515 & 15& \multirow{2}*{99.83}& 116&  0.4271\\
  & 1e-4 &328  & 2.620 & 328  & 2.847 &313 & & 2101& 23.118\\ 
  \midrule
   \multirow{2}*{2}& 1e-4 & 66    & 2.515   & 66 & 2.462 & 65& \multirow{2}*{86.06} &99 & 0.3670\\
  & 1e-5 &82    &  2.505 & 82 & 2.468 & 81& &115 &0.4079\\ 
\midrule
 \multirow{2}*{3}& 1e-2 & 32   & 2.461   & 33 & 2.423 & 32&  \multirow{2}*{86.88} & 3635&61.477\\
  & 1.5e-3 &1588    &  11.24 & 1588 & 13.47 & 1587&&7925 & 272.62\\ 
  \bottomrule
\end{tabular}
}
\end{table}

\begin{table}[htb]
\centering
\caption{Results for fixed precision with real (image) data.}
\label{table:2}
\resizebox{\columnwidth}{!}{
\begin{tabular}{*{11}{c}}
\toprule
 \multirow{2}*{Data} &\multirow{2}*{$\mathbf{\epsilon}$} & \multirow{2}*{Parameters} &\multicolumn{2}{c}{PowerLU\_FP}& \multicolumn{2}{c}{RandQB\_FP} & \multicolumn{2}{c}{Truncated SVD} & \multicolumn{2}{c}{Range Finder }\\
 \cmidrule{4-5} \cmidrule{6-7} \cmidrule{8-9} \cmidrule{10-11}
  & & &  Rank & Time(s) & Rank & Time(s) & Rank & Time(s) & Rank & Time(s)\\ \midrule
  \multirow{4}*{Image}&\multirow{4}*{0.1}& $P = 1, b = 10$ & 472  & 1.967 & 471 & 2.415 & \multirow{4}*{426}& \multirow{4}*{40.62}&\multirow{4}*{2892} & \multirow{4}*{45.99}\\
  & &$P = 1, b = 20$ & 471 & 3.753  & 472 & 4.008 &  & & &\\ 
  \cmidrule{3-7}
  &  & $P = 2, b = 10$   & 443   & 2.749 & 443 & 3.310 &  & & &\\
  & &$P = 2, b = 20$   & 443 & 5.388 & 443 &6.122  & & & \\ 
\bottomrule
\end{tabular}
}
\end{table}

\begin{figure}[ht]
\centering
\includegraphics[width=0.98\textwidth]{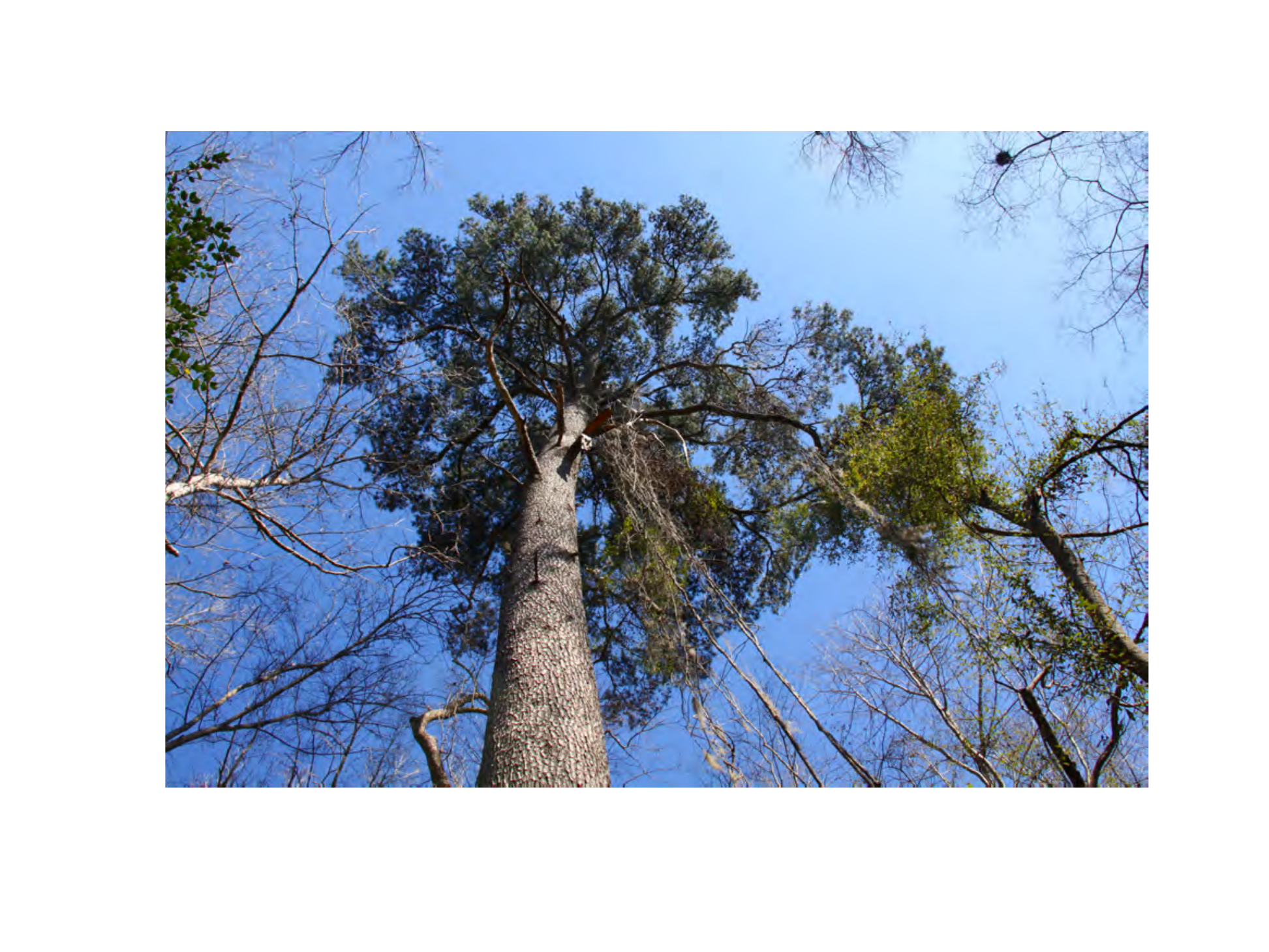}
\caption{ The original image}
\label{fig:8}
\end{figure}

\begin{figure}[ht]
\centering
\includegraphics[width=0.88\textwidth]{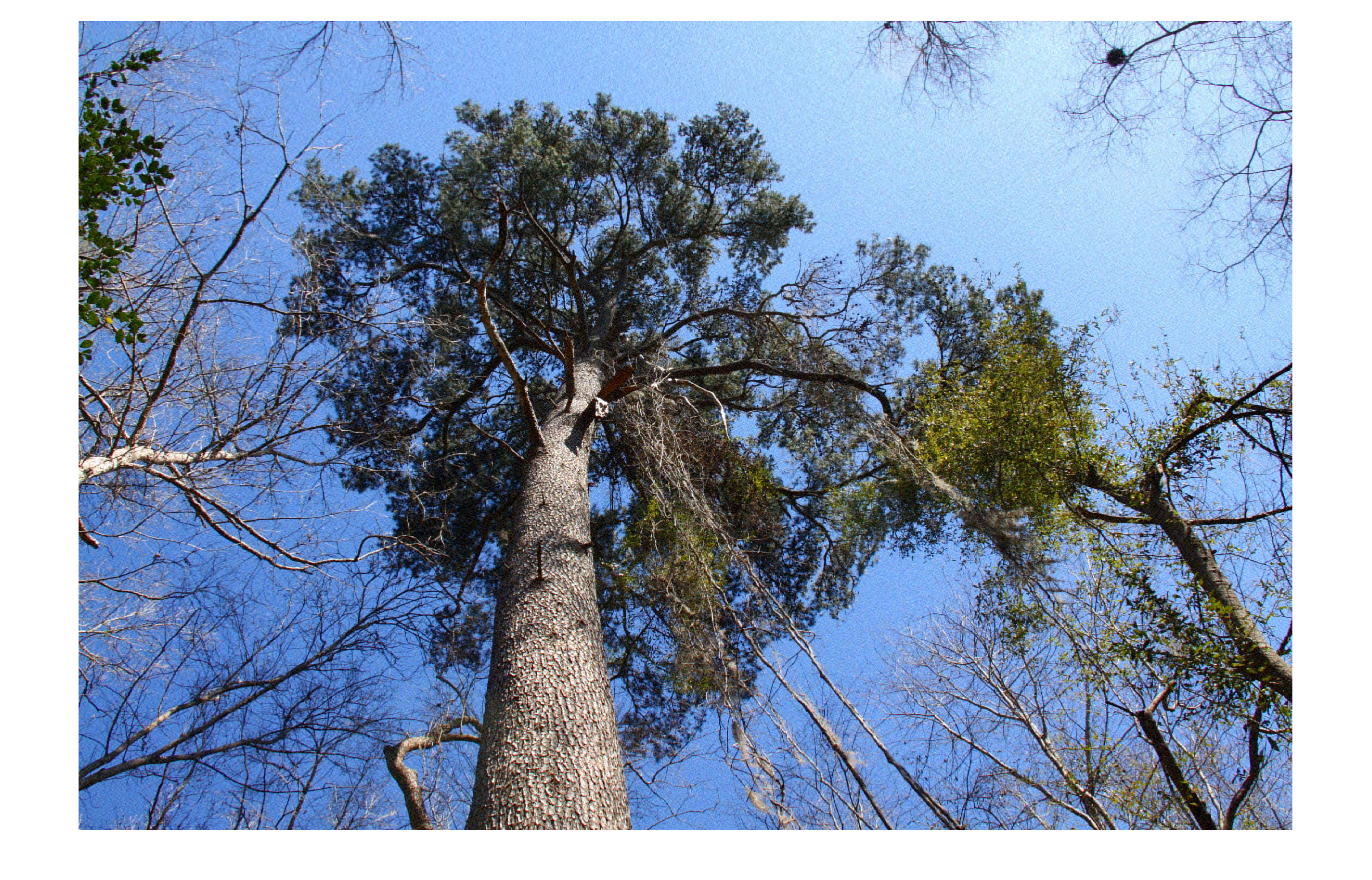}
\caption{ The compressed image obtained by PowerLU\_FP with $10\%$ error}
\label{fig:9}
\end{figure}
\section{Conclusions and Future Work}

In this manuscript, we discussed two novel randomized LU algorithms: PowerLU and PowerLU\_FP to solve both the fixed-rank and the fixed precision low-rank approximation problems. A randomized LU (RandLU) Algorithm \ref{alg:rlu} was first proposed in \cite{shabat2018randomized}. Our tests show that compared with RandSVD, RandLU is slower for large matrices.  Moreover, round-off errors in power iteration will extinguish the small singular values of the input matrix. For the fixed precision problem, RandLU will not work.  In this paper, we introduced a reorthogonalization Algorithm \ref{alg:basicpoweriteration} for the RandLU Algorithm \ref{alg:rlu}.  With our reorthogonalization procedure, RandLU can eliminate this round-off issue.  Compared with RandLU, our new randomized LU Algorithm \ref{alg:bpowerlu}, called PowerLU, is generally faster and accurate.  PowerLU is based on the orthonormal basis of the input matrix $\mathbf{A}^{T}$ and the approximation (\ref{eq:rightsingular}). PowerLU allows an arbitrary number of passes $v\geq 2$ of the matrix $\mathbf{A}$ via the generalized power iteration reorthogonalization Algorithm \ref{alg:powerlufp}. However, RandLU only allows even number of passes of $\mathbf{A}$.

To solve the fixed precision problem, we proposed an efficient blocked adaptive rank determination Algorithm \ref{alg:adaptivewithoutupdating}, which uses an efficient error indicator (\ref{eq:errorerror}) without the need for a matrix update. It can determine a rank close to the optimal rank produced by SVD.  Then we proposed PowerLU\_FP Algorithm \ref{alg:powerlufp}, which is based on Algorithm
\ref{alg:adaptivewithoutupdating}. This variant is faster than the randomized QB algorithm in \cite{martinsson2016randomized}.  We also proved the correctness of our proposed algorithms.
For the problem where accessing the matrix is expensive, we proposed a single-pass LU-based algorithm, which requires that matrix be stored in column-major format.  Tests establish the accuracy of our proposed single-pass Algorithm \ref{alg:singlepasslu}.

We plan to build on this work in several ways.  First, we plan to implement the algorithms in a high-level computer language such as C/C++ for the sake of efficiency and portability.  A longer term goal is to create a distributable piece of mathematical software that can form the basis of a library for randomized linear algebra.  We believe that such software will be of interest to may different communities, including big data and data science.  We also plan to implement our algorithms on GPUs.  We are particularly enthusiastic to explore this with our blocked algorithm.
One more potential research direction is the application of the proposed algorithms on image and video processing since these our single-pass version is a stream algorithm.  Finally, we plan on considering higher-order tensor decomposition including CANDECOMP/PARAFAC (CP) decomposition \cite{kiers2000towards, kolda2009tensor},  Tucker decomposition\cite{kolda2009tensor, tucker1963implications, tucker1966some, tucker1964extension} and Tensor-Train decomposition \cite{oseledets2011tensor}.

\bibliographystyle{siamplain}
\bibliography{references}

\end{document}